\documentclass{amsart}
\usepackage{xspace,bbm}
\usepackage[small]{diagrams}

\def\makedef#1#2{\expandafter\gdef\csname#1\endcsname{#2}}
\def\makelet#1#2{\expandafter\let\csname#1\expandafter\endcsname\csname#2\endcsname}
\def\mat#1{\ensuremath{#1}\xspace}
\def\defmath#1#2{\makedef{#1}{\mat{#2}}}
\def\redefmath#1{\makelet{temp@#1}{#1}\defmath{#1}{\csname temp@#1\endcsname}}

\def\defbb#1{\defmath{c#1}{\mathbb{#1}}}		
\def\defcal#1{\defmath{l#1}{\mathcal{#1}}}	
\def\deffrak#1{\defmath{g#1}{\mathfrak{#1}}} 
\def\bbcal#1{\defbb{#1}\defcal{#1}\deffrak{#1}}

\bbcal A
\bbcal B
\bbcal C
\bbcal D
\bbcal E
\bbcal F
\bbcal G
\bbcal H
\bbcal I
\bbcal J
\bbcal K
\bbcal L
\bbcal M
\bbcal N
\bbcal O
\bbcal P
\bbcal Q
\bbcal R
\bbcal S
\bbcal T
\bbcal U
\bbcal V
\bbcal W
\bbcal X
\bbcal Y
\bbcal Z

\deffrak b
\deffrak c
\deffrak C
\deffrak d
\deffrak h
\deffrak j
\deffrak k

\deffrak m
\deffrak n
\deffrak p
\deffrak q
\deffrak r
\deffrak s
\deffrak t
\deffrak u
\deffrak v

\def\al{\mat{\alpha}}

\def\ga{\mat{\gamma}}

\def\Eps{\mat{\lE}}

\def\hi{\mat{\chi}}

\def\la{\mat{\lambda}}

\def\Om{\mat{\Omega}}

\def\vi{\mat{\varphi}}

\redefmath{eta}
\redefmath{xi}
\redefmath{mu}
\redefmath{nu}
\redefmath{Phi}
\redefmath{Psi}
\redefmath{psi}
\redefmath{pi}
\redefmath{rho}

\def\mrm@#1{\mat{\mathrm{#1}}}

\def\opr#1{\operatorname{#1}}
\def\DMO#1#2{\defmath{#1}{\opr{#2}}}
\def\oper#1{\defmath{#1}{\opr{#1}}}

\oper{asd}

\DMO{Hom}{Hom}
\DMO{RHom}{RHom}
\DMO{lHom}{\lH\mathit{om}}
\DMO{Ext}{Ext}
\DMO{lExt}{\lE\mathit{xt}}
\DMO{End}{End}
\DMO{Aut}{Aut}
\DMO{Fun}{Fun}
\DMO{Tor}{Tor}
\DMO{ext}{ext}

\DMO{Ob}{Ob}
\DMO{Mor}{Mor}
\DMO{im}{im}
\DMO{coim}{coim}
\DMO{coker}{coker}
\DMO{Arr}{Arr}

\DMO{Id}{Id}
\DMO{id}{id}
\DMO{add}{add} 
\DMO{ind}{ind} 
\DMO{pro}{pro} 
\DMO{Map}{Map} %
\DMO{Iso}{Iso} %
\DMO{Isom}{Isom}%
\DMO{Ind}{Ind}
\DMO{Bun}{Bun}
\DMO{Higgs}{Higgs}
\DMO{Hitch}{Hitch}

\DMO{Loc}{Loc}
\DMO{Maps}{Maps}
\makeatletter
\@ifpackageloaded{bbm}
{}
{}
\makeatother

\DMO{Presh}{Presh}


\DMO{coalg}{Coalg}
\DMO{Rep}{Rep}
\DMO{Cor}{Cor}

\DMO{Mod}{Mod}
\DMO{rad}{rad}
\DMO{soc}{soc}
\DMO{ann}{ann}
\DMO{pd}{pd}

\DMO{Spec}{Spec}
\DMO{Specm}{Specm}
\DMO{Max}{Max}
\DMO{spec}{Spec}
\DMO{Proj}{Proj}
\DMO{supp}{supp}
\DMO{Coh}{Coh}
\DMO{coh}{coh}
\DMO{Qcoh}{QCoh}
\DMO{QCoh}{QCoh}
\DMO{Pic}{Pic}
\DMO{Div}{Div}
\DMO{ch}{ch}
\DMO{Hilb}{Hilb}
\DMO{Fitt}{Fitt}
\DMO{Quot}{Quot}

\DMO{Gras}{Gr}
\DMO{Grass}{Gr}
\DMO{Flag}{Flag}
\DMO{Jac}{Jac}

\DMO{cone}{cone}
\DMO{Tw}{Tw}
\DMO{rank}{rk}
\DMO{rk}{rk}
\DMO{codim}{codim}
\DMO{cov}{cov}
\DMO{sgn}{sgn}
\DMO{td}{td}
\DMO{GL}{GL}
\DMO{PGL}{PGL}
\DMO{SL}{SL}
\DMO{SO}{SO}

\DMO{Der}{Der}
\DMO{der}{Der}
\DMO{coder}{Coder}
\DMO{diag}{diag}
\DMO{HMod}{HMod} 
\DMO{ad}{ad}
\DMO{Ad}{Ad}

\DMO{Ho}{Ho}
\def\Gr{\mathbf{Gr}} 

\DMO{har}{char}
\DMO{sk}{sk}
\DMO{cosk}{cosk}
\DMO{Gal}{Gal}
\DMO{tr}{tr}
\DMO{Tr}{Tr}
\DMO{Sh}{Sh}
\DMO{Is}{Is} 
\DMO{Hol}{Hol} 
\DMO{Lie}{Lie} 
\DMO{Res}{Res} 
\DMO{irr}{irr} %
\DMO{Irr}{Irr} %
\DMO{Exp}{Exp} %
\DMO{Log}{Log} %
\DMO{Pow}{Pow}
\DMO{pow}{pow}
\DMO{mult}{mult} %
\DMO{height}{ht} %
\DMO{wt}{wt}
\DMO{Vect}{Vect}
\DMO{moda}{mod}
\DMO{hd}{hd} 
\DMO{face}{face}
\DMO{Sym}{Sym}
\DMO{Com}{Com} 
\DMO{Eig}{Eig} 
\DMO{cl}{cl} 
\DMO{Li}{Li} 
\DMO{Imxx}{Im} 
\DMO{Rexx}{Re}

\DMO{St}{St}
\DMO{Var}{Var}


\def\n#1{\mat{\lvert#1\rvert}}
\def\lrb#1{\left(#1\right)}

\def\iso{\simeq}
\def\ts{\otimes}
\def\tl#1{\mat{\tilde{#1}}}
\def\wtl#1{\mat{\widetilde{#1}}}
\def\what#1{\mat{\widehat{#1}}}

\def\xx{\times}

\def\pser#1{[\![#1]\!]} 
\def\lser#1{\mbox{(\!(}#1\mbox{)\!)}} 


\def\half{\mat{\frac12}}
\def\oh{\half} 


\def\inv{^{-1}}




\def\sets#1#2{\mat{\{ #1 \mid #2\}}}

\def\emb{\hookrightarrow}
\def\mto{\mapsto}
\def\arr{\futurelet\test\arrtest}
\def\arrtest{\ifx^\test\let\next\arra\else\let\next\arrb\fi\next}
\def\arra^#1{\xrightarrow{#1}} \def\arrb{\to}

\def\arrowsD{
\def\mto{{\:\vrule height .9ex depth -.2ex width .04em\!\!\!\;\ar}}
\def\ar{{\:\vrule depth -.52ex height .60ex width 0.85em\;\!\!\rhla\,}}
\def\arr{\futurelet\test\arrtest}
\def\arrtest{\ifx^\test\let\next\arra\else\let\next\arrb\fi\next}
\def\arra^##1{\rTo^{##1}} \def\arrb{\ar}
\def\emb{\futurelet\test\embtest}
\def\embtest{\ifx^\test\let\next\emba\else\let\next\embb\fi\next}
\def\emba^##1{\rInto^{##1}} \def\embb{{\:\rthooka\!\!\!\ar}}
\newarrow{Eq}=====
\def\rrarr{\pile{\rTo\\ \rTo}}
\def\lrarr{\pile{\rTo\\ \lTo}}   
\newarrow{ShortTo}{}{}-->
}

\def\arrowsDStandard{
\newarrow{TeXto}----{->}
\newarrow{TeXinto}C---{->}
\newarrow{TeXonto}----{->>}
\newarrow{TeXdashto}{}{dash}{}{dash}{->}
\newarrow{Eq}=====
\def\ar{\rightarrow}
\def\emb{\futurelet\test\embtest}
\def\embtest{\ifx^\test\let\next\emba\else\let\next\embb\fi\next}
\def\emba^##1{\rTeXinto^{##1}} \def\embb{\hookrightarrow}
\def\rrarr{\pile{\rTo\\ \rTo}}
\def\lrarr{\pile{\rTo\\ \lTo}}   
}

\def\theorems{
\newcounter{nthr} 
\numberwithin{nthr}{section}
\let\theHnthr\thenthr
\newtheorem{thr}[nthr]{Theorem}
\newtheorem{prp}[nthr]{Proposition}
\newtheorem{lmm}[nthr]{Lemma}

\newtheorem{conj}{Conjecture}
\newtheorem{rmr}[nthr]{Remark}
\theoremstyle{definition}
\newtheorem{dfn}[nthr]{Definition}

\theoremstyle{remark}
}

\newif\ifrem\remtrue
\def\rem#1{\ifrem\marginpar{\raggedright\footnotesize #1}\fi}


\def\br{\linebreak}
\def\ub#1{\mat{\overline{#1}}}  

\def\ie{i.e.\ }
\def\eg{e.g.\ }
\def\cf{cf.\ }

\def\eprint#1#2{%
\expandafter\ifx\csname eprint@#1\endcsname\relax#1:#2%
\else\def\itemID{#2}\csname eprint@#1\endcsname\fi}

\def\defArchive#1#2{%
\makedef{eprint@#1}{#2}}

\defArchive{arxiv}{\href{http://arXiv.org/abs/\itemID}{{\tt arXiv:\itemID}}}
\defArchive{numdam}{\href{http://www.numdam.org/item?id=\itemID}{Numdam:\itemID}}

\theorems\arrowsDStandard
\usepackage{hyperref}

\begin{document}
\remfalse
\def\OO{\Om'}
\def\TT{T}
\title[ADHM recursion formula]{Solutions of the motivic ADHM recursion formula}%
\author{Sergey Mozgovoy}%
\email{mozgovoy@maths.ox.ac.uk}%
\begin{abstract}
We give an explicit solution of the ADHM recursion formula conjectured by Chuang, Diaconescu, and Pan. This solution is closely related to the formula for the Hodge polynomials of Higgs moduli spaces conjectured by Hausel and Rodriguez-Villegas. We solve also the twisted motivic ADHM recursion formula. As a byproduct we obtain a conjectural formula for the motives of twisted Higgs moduli spaces, which generalizes the conjecture of Hausel and Rodriguez-Villegas.
\end{abstract}

\maketitle

\section{Introduction}
The main goal of this paper is to solve the ADHM recursion formula conjectured by Chuang, Diaconescu, and Pan \cite{chuang_motivic}. We will show that in the untwisted case solutions are given by expressions closely related to the Hausel-Rodriguez-Villegas polynomials \cite[Conj.~5.6]{hausel_mirrora}, which are conjecturally the Hodge polynomials of the Higgs moduli spaces. This observation was made originally in \cite{chuang_motivic} for small rank and genus. To provide a solution in the twisted case we generalize the conjecture of Hausel and Rodriguez-Villegas to the moduli spaces of twisted Higgs bundles (see Conjecture \ref{E twisted}) and show that these new invariants give a solution of the ADHM recursion formula (see Theorem \ref{main1}).

The second goal of this paper is to understand the ADHM recursion formula in the framework of motivic Donaldson-Thomas invariants developed by Kontsevich and Soibelman \cite{kontsevich_stability}.
Both the ADHM recursion formula and the conjecture of Hausel and Rodriguez-Villegas are formulated using the Hodge polynomials for ordinary cohomologies. These invariants are not of motivic nature, but we can use the Poincar\'e duality to formulate both conjectures in terms of $E$-polynomials, which are motivic invariants. The new conjectures can be formulated actually in terms of motives. The reason is that the main ingredient in the Hausel-Rodriguez-Villegas formula (as well as in the ADHM recursion formula) can be identified with the $E$-polynomial specialization of the zeta-function of a curve, see Remark~\ref{rmr:zeta}. By taking the motivic zeta-function instead of this specialization, we obtain the motivic ADHM recursion formula (Conjecture \ref{conj:rec1}) as well as the motivic version of the Hausel-Rodriguez-Villegas conjecture (Conjecture \ref{E untwisted}).

Motivic ADHM recursion formula that we formulate in Conjecture \ref{conj:rec1} can be considered as a wall-crossing formula for framed objects \cite[Corollary 4.25]{mozgovoy_wall-crossing}.
But it is still conjectural because we don't have yet an integration map for ADHM sheaves, which is needed to apply the results from~\cite{mozgovoy_wall-crossing}. The original ADHM recursion formula \cite{chuang_motivic} was motivated by the wall-crossing formula for the classical Donaldson-Thomas invariants of the moduli spaces of ADHM sheaves \cite{chuang_chamber,diaconescu_chamber}.

Motivic ADHM recursion formula allows us to express the Higgs sheaf invariants (Section \ref{sec:Higgs}) in terms of the asymptotic ADHM invariants (Section \ref{sec:Asymp}). The later invariants were determined in \cite{chuang_motivic} by string theoretic techniques. This formula is still conjectural from the mathematical point of view. We give its motivic generalization in Conjecture \ref{conj:A}.

Finally, let us discuss those situations where Conjecture \ref{E twisted} on the motives of the moduli spaces of twisted Higgs bundles is known to be true. In the case of untwisted Higgs bundles it is a motivic version of the conjecture of Hausel and Rodriguez-Villegas which was verified in \cite{hausel_mirrora,hausel_mixed}
for rank $2$ using the results of Hitchin \cite{hitchin_self-duality} and for rank $3$ and small genus using the results of Gothen \cite{gothen_betti}. Recently it was checked in \cite{garcia-prada_motive} also for rank $4$ and small genus. In the twisted case our conjecture was checked by Rayan \cite{rayan_geometry} for genus $0$, rank $\le5$, and twisting parameter $\le4$.

I would like to thank Tamas Hausel for drawing my attention to the paper \cite{chuang_motivic} and for many useful discussions.
I would like to thank Duiliu-Emanuel Diaconescu, Jochen Heinloth, and Steven Rayan for many helpful remarks. The author's research was supported by the EPSRC grant EP/G027110/1.

\section{Preliminaries}
\subsection{Partitions}
For any partition $\la=(\la_1,\la_2,\dots)$ (see \eg \cite{Macdonald95}) we define its diagram to be the set
$$d(\la)=\sets{(i,j)}{i\ge1,1\le j\le\la_i}.$$
The elements of the diagram \la are called boxes.
For any box $x=(i,j)\in d(\la)$, we define 
the arm, leg, and hook lengths of $x$ by
$$a(x)=\la_i-j,\quad l(x)=\la'_j-i,\quad h(x)=a(x)+l(x)+1,$$
where $\la'$ denotes the conjugate partition.
Define \cite[1.5]{Macdonald95}
$$n(\la)=\sum_{i\ge1}(i-1)\la_i=\sum_{j\ge1}\binom{\la'_j}2=\sum_{x\in d(\la)}l(x).$$
One can easily show
$$\sum_{x\in d(\la)}(i(x)-l(x)-1)=0.$$

\subsection{\texorpdfstring{\la}{lambda}-Rings}
For the definition and basic properties of \la-rings see \eg \cite{getzler_mixed,mozgovoy_computational}. Given a complete filtered \la-ring which is an algebra over $\cQ$, we define plethystic operators \cite{mozgovoy_computational}
$$\Psi:=\sum_{k\ge1}\frac1k\psi_k,\quad \Psi\inv:=\sum_{k\ge1}\frac{\mu(k)}k\psi_k.$$
We define plethystic exponential and plethystic logarithm
$$\Exp:=\exp\circ\Psi,\quad \Log:=\Psi\inv\circ\log.$$
Given a \la-ring $R$ which is an algebra over \cQ, we will endow the algebra
$R\pser {x_1,\dots,x_n}$ with a \la-ring structure by extending the Adams operators
$$\psi_k(ax^\al)=\psi_k(a)x^{k\al},\qquad a\in R,\al\in\cN^{n}.$$

\subsection{Ring of motives}
Let $\lM=K_0(CM_\cC)$ be the Grothendieck ring of effective Chow motives over \cC with rational coefficients. It is known that $\lM$ is a (special) \la-ring \cite{getzler_mixed,heinloth_note}.
Let $\cL=[\cA^1]\in\lM$. Let $\what\lM$ be the dimensional completion of $\lM[\cL\inv]$ (\cf \cite{behrend_motivica,mozgovoy_wall-crossing}). Finally, let $\lV=\what\lM[\cL^\oh]$. 
To shorten the notation, we will often denote $\cL^\oh$ by $y$.
The ring \lV still has a structure of a \la-ring, where we extend the Adams operations by 
$$\psi_n(\cL^\oh)=\cL^{\frac n2}.$$
Note that the elements $1-\cL^n$, $[\GL_n]$ are invertible in $\what\lM$ and \lV.

For any algebraic variety $X$, we define its motivic zeta-function
$$Z_X(t)=\sum_{n\ge0}[S^nX]t^n=\Exp([X]t)\in \lV\pser t.$$
It is known \cite{heinloth_note,kapranov_elliptic, larsen_rationality} that for a curve $X$ of genus $g$
\begin{equation}
Z_X(t)=\frac{P_X(t)}{(1-t\cL)(1-t)},
\label{eq:P_X}
\end{equation}
where $P_X(t)$ is a polynomial of degree $2g$.
Moreover,
\begin{equation}
Z_X(1/t\cL)=(t^2\cL)^{1-g}Z_X(t).
\label{eq:Z symmetry}
\end{equation}

\subsection{
\texorpdfstring{$E$}{E}-polynomials}
For any complex algebraic variety $X$, there is a mixed Hodge structure on the ordinary cohomology groups $H^*(X,\cQ)$ and on the compactly supported cohomology groups $H^*_c(X,\cQ)$, see \cite{deligne_theoriea}.
We define
$$h^{p,q,k}(X)=\dim \Gr_p^F\Gr_{p+q}^W H^k(X,\cC),\qquad h_c^{p,q,k}(X)=\dim \Gr_p^F\Gr_{p+q}^W H^k_c(X,\cC).$$
Define the mixed Hodge polynomial of $X$ by the formula
$$H(X,u,v,t)=\sum_{p,q,k}h^{p,q,k}(X)u^pv^qt^k.$$
Define the $E$-polynomial of $X$ \cite{danilov_newton} by the formula
$$E(X,u,v)=\sum_{p,q,k}(-1)^k h_c^{p,q,k}(X)u^pv^q.$$
For example, for a curve $X$ of genus $g$, we have
$$E(X,u,v)=1-g(u+v)+uv.$$
The map $X\mto E(X)$ extends to a \la-ring homomorphism
$$E:\lV\to \cQ[u,v]\pser{(uv)\inv}[(uv)^\oh],$$
where the \la-ring structure on the right is given by
$$\psi_n(f(u,v))=f(u^n,v^n).$$
Applying the $E$-polynomial map to the motivic zeta-function $Z_X(t)$ we obtain
$$Z_X^H(t,u,v)=\sum_{n\ge0}E(S^nX,u,v)t^n=\Exp(E(X,u,v)t).$$
If $X$ is a curve of genus $g$ then
\begin{equation}
\label{eq:A}
Z_X^H(t,u,v)=\Exp(E(X,u,v)t)=\frac{(1-tu)^g(1-tv)^g}{(1-tuv)(1-t)}.
\end{equation}

\begin{rmr}
\label{y-genus1}
Note that the $\hi_y$-genus specialization
$$Z_X^H(t,y,1)=\frac{(1-ty)^g(1-t)^g}{(1-ty)(1-t)}$$
is a polynomial in $t,y$ if $g\ge1$.
\end{rmr}

\section{Invariants of Higgs moduli spaces}
\subsection{Hausel-Rodriguez-Villegas conjecture}
\label{sec:hrv}
Let $X$ be a curve of genus $g$.
Following \cite[Remark 5.5.9]{hausel_mirrora}
we define, for any partition \la,
\rem{I have changed the signs of $u,v$ to be able to use the \la-rings formalism later}
$$\lH'_\la(t,u,v)
=(tuv)^{(2-2g)n(\la)}\prod_{x\in d(\la)}
\frac{(1-t^hv^lu^{l+1})^g(1-t^hu^lv^{l+1})^g}{(1-t^h(uv)^{l+1})(1-t^h(uv)^l)}.$$

\begin{rmr}
\label{rmr:zeta}
We have
\begin{equation}
\lH'_\la(t,u,v)=(tuv)^{(2-2g)n(\la)}
\prod_{x\in d(\la)}Z_X^H(t^h(uv)^l,u,v),
\label{eq:lH}
\end{equation}
where $Z_X^H(t,u,v)$ is given by equation \eqref{eq:A}.
\end{rmr}
\begin{rmr}
One defines in \cite[2.4.10]{hausel_mixed} a slightly different function
$$\lH''_\la(z,w)=\prod_{x\in d(\la)}\frac{(z^{2a+1}-w^{2l+1})^{2g}}
{(z^{2a+2}-w^{2l})(z^{2a}-w^{2l+2})}.$$
It is related to $\lH'_\la(t,u,v)$ by
$$\lH''_\la(t^{1/2},1/yt^{1/2})
=(ty^2)^{\n\la(1-g)}\lH'_\la(t,y,y).$$
\end{rmr}

Define the rational functions $H'_n\in\cQ(t,u,v)$ by the formula
\begin{equation}
\sum\lH'_\la(t,u,v)T^{\n\la}=\Exp
\lrb{\sum_{n\ge1}\frac{(tuv)^{(1-g)(n^2-n)}}{(1-tuv)(1-t)}H'_n(t,u,v)T^n}.
\label{eq:H_n}
\end{equation}

It is conjectured in \cite[4.2.3]{hausel_mixed} that
$H'_n(t,u,v)$ are polynomials and $H'_n(t,-u,-v)$ have nonnegative coefficients.
Moreover, the Hodge polynomial for the ordinary cohomologies of the moduli space $\lM(n,d)$ of stable Higgs bundles with coprime rank $n$ and degree $d$ on a curve $X$ is given conjecturally \cite[Conj.~5.6]{hausel_mirrora} by 
\begin{equation}
H(\lM(n,d),u,v,-1)=H'_n(1,u,v).
\end{equation}

\begin{rmr}
The moduli space $\lM(n,d)$ of stable Higgs bundles is smooth and has dimension $2((g-1)n^2+1)$. The Hodge structure on $H^*(\lM(n,d),\cQ)$ is pure \cite[Theorem 2.1]{hausel_mirrora}.
\end{rmr}

\begin{rmr}
It follows from Remark \ref{y-genus1} that the $y$-genus specialization $H'(t,y,1)$ is a polynomial in $t^{\pm1},y$ if $g\ge1$.
\end{rmr}

\begin{rmr}
It is conjectured in \cite[Conj.~4.2.1]{hausel_mixed},
\cite[Conj.~5.1]{hausel_mirrora} that the mixed Hodge
polynomial of the character variety $\lM_B(\GL_n(\cC))$ is
given by
$$H(\lM_B(\GL_n(\cC)),u,v,t)=H'_n(uv,-t,-t).$$
This is proved in
\cite{hausel_mixed} for $t=1$.
\end{rmr}

\subsection{
\texorpdfstring{$E$}{E}-polynomials of Higgs moduli spaces}
\label{sec:E-poly}
The conjectural polynomial\br
 $H'_n(1,u,v)$ describes the Hodge polynomial for ordinary cohomologies of the moduli space $\lM(n,d)$ of Higgs bundles. In order to get the motivic invariants of $\lM(n,d)$, we have to pass to the $E$-polynomial (\ie to the Hodge polynomial for cohomologies with compact support). This is done by applying the Poincar\'e duality to the smooth variety $\lM(n,d)$. The $E$-polynomial of $\lM=\lM(n,d)$ should be equal to 
$$(uv)^{\dim\lM}H'_n(1,u\inv,v\inv).$$
Let 
$$H_n(t,u,v)=H'_n(tuv,u\inv,v\inv).$$
Using equations \eqref{eq:lH} and \eqref{eq:H_n} we can show that $H_n(t,u,v)$ satisfy the following equation
\begin{multline*}
\sum_{\la}t^{(1-g)2n(\la)}\left(\prod_{x\in d(\la)}Z_X^H(t^h(uv)^a,u,v)\right)T^{\n\la}\\
=\Exp\left(\sum_{n\ge1}\frac{t^{(1-g)(n^2-n)}}{(1-t)(1-tuv)}H_n(t,u,v)T^n\right).
\end{multline*}
As we will see later (see Lemma \ref{palyndr2}), this equation implies that
$$t^{-\dim\lM/2}H_n(t,u,v)$$
is invariant under the change of variables $t\mto1/tuv$. In particular
$$H_n(1,u,v)=(uv)^{\dim\lM/2}H_n(1/uv,u,v)$$
and therefore the $E$-polynomial of $\lM$ should be equal to
$$(uv)^{\dim\lM}H'_n(1,u\inv,v\inv)
=(uv)^{\dim\lM}H_n(1/uv,u,v)
=(uv)^{\dim\lM/2}H_n(1,u,v).$$ 

We can reformulate now the conjecture of Hausel and Rodriguez-Villegas in terms of $E$-polynomials
\begin{conj}
For any partition \la define
\begin{multline*}
\lH_\la(t,u,v)=t^{(1-g)(2n(\la)+\n\la)}\prod_{x\in d(\la)}Z_X^H(t^h(uv)^a,u,v)\\
=\prod_{x\in d(\la)}t^{(1-g)(2l+1)}Z_X^H(t^h(uv)^a,u,v).
\end{multline*}
Define the functions $H_n(t,u,v)\in\cZ[u,v]\lser t$ by
$$\sum_{\la}\lH_\la(t,u,v)T^{\n\la}
=\Exp\left(\sum_{n\ge1}\frac{t^{(1-g)n^2}H_n(t,u,v)}{(1-t)(1-tuv)}T^n\right).$$
Then $H_n(t,u,v)$ are polynomials and we have
$$E(\lM,u,v)=(uv)^{\dim\lM/2}H_n(1,u,v),$$
where $\lM=\lM(n,d)$ is the moduli space of Higgs bundles having coprime rank $n$ and degree $d$, and $\dim\lM=2((g-1)n^2+1)$.
\end{conj}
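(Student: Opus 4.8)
The plan is to show that this reformulation is equivalent to the original Hausel--Rodriguez-Villegas conjecture, i.e. to reduce both of its assertions to the identity $H(\lM(n,d),u,v,-1)=H'_n(1,u,v)$ together with the polynomiality of the $H'_n$ defined by \eqref{eq:H_n}. The crux is a purely formal identity: the functions $H_n$ introduced here by the generating series $\sum_\la\lH_\la T^{\n\la}=\Exp(\sum_{n\ge1}t^{(1-g)n^2}(1-t)\inv(1-tuv)\inv H_nT^n)$ coincide with $H'_n(tuv,u\inv,v\inv)$. Once this is established, polynomiality of the $H_n$ follows from that of the $H'_n$ \cite{hausel_mixed}, and the $E$-polynomial identity follows from the Poincar\'e duality computation of Section~\ref{sec:E-poly}.

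First I would carry out the change of variables $(t,u,v)\mto(tuv,u\inv,v\inv)$ in \eqref{eq:H_n}. Using the explicit formula \eqref{eq:A} one checks the box-by-box identity $Z_X^H((tuv)^h(uv)^{-l},u\inv,v\inv)=Z_X^H(t^h(uv)^a,u,v)$: after inverting $u,v$ the leg-indexed argument becomes arm-indexed because $h-l=a+1$, while the $g$-th powers in the numerator absorb the resulting interchange of $u$ and $v$. Together with $(tuv\cdot u\inv v\inv)^{(2-2g)n(\la)}=t^{2(1-g)n(\la)}$ this gives $\lH'_\la(tuv,u\inv,v\inv)=t^{(1-g)2n(\la)}\prod_{x\in d(\la)}Z_X^H(t^h(uv)^a,u,v)$, so that the substituted \eqref{eq:H_n} becomes exactly the intermediate identity of Section~\ref{sec:E-poly}, with $H_n=H'_n(tuv,u\inv,v\inv)$ and prefactor $t^{(1-g)(n^2-n)}(1-t)\inv(1-tuv)\inv$.

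Next I would match this intermediate identity with the generating series of the conjecture. They differ only by the extra factor $t^{(1-g)\n\la}$ built into $\lH_\la$ and by the exponent $n^2$ in place of $n^2-n$. Absorbing $t^{(1-g)\n\la}=(t^{1-g})^{\n\la}$ into the summation variable amounts to the substitution $T\mto t^{1-g}T$, and the one real check is that this substitution commutes with $\Exp$. Since $\psi_k$ acts by $t\mto t^k$, $u\mto u^k$, $v\mto v^k$, $T\mto T^k$, for any coefficient $a$ one has $\psi_k(a(t^{1-g}T)^n)=\psi_k(a)t^{k(1-g)n}T^{kn}$, which equals the image of $\psi_k(aT^n)=\psi_k(a)T^{kn}$ under $T\mto t^{1-g}T$; hence $T\mto t^{1-g}T$ commutes with every $\psi_k$, with $\Psi$, and therefore with $\Exp$. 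Pushing it through the plethystic exponential converts $t^{(1-g)(n^2-n)}$ into $t^{(1-g)n^2}$ and leaves the $H_n$ unchanged, so comparing coefficients via $\Log$ shows the two definitions of $H_n$ agree; polynomiality is then inherited from the $H'_n$.

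Finally, for the geometric identity I would invoke three inputs in turn. The Poincar\'e duality argument of Section~\ref{sec:E-poly}, valid because $\lM=\lM(n,d)$ is smooth of complex dimension $\dim\lM=2((g-1)n^2+1)$, gives $E(\lM,u,v)=(uv)^{\dim\lM}H(\lM,u\inv,v\inv,-1)$; the original conjecture \cite{hausel_mirrora} identifies $H(\lM,u\inv,v\inv,-1)=H'_n(1,u\inv,v\inv)=H_n(1/uv,u,v)$; and the palindromicity of Lemma~\ref{palyndr2} --- that $t^{-\dim\lM/2}H_n(t,u,v)$ is invariant under $t\mto1/tuv$, which in turn rests on the functional equation \eqref{eq:Z symmetry} of the motivic zeta-function --- yields $H_n(1/uv,u,v)=(uv)^{-\dim\lM/2}H_n(1,u,v)$. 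Combining the three gives $E(\lM,u,v)=(uv)^{\dim\lM/2}H_n(1,u,v)$. The main obstacle is precisely that the second input, the HRV identity for the ordinary Hodge polynomial, is only conjectural; what the argument establishes unconditionally is the equivalence of this $E$-polynomial reformulation with the original conjecture, together with the polynomiality reduction, and that equivalence is the real content.
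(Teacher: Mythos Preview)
Your proposal is correct and follows essentially the same route the paper takes in Section~\ref{sec:E-poly}: define $H_n(t,u,v)=H'_n(tuv,u\inv,v\inv)$, verify via \eqref{eq:lH} and \eqref{eq:H_n} that this satisfies the new generating identity, then combine Poincar\'e duality with the palindromicity of Lemma~\ref{palyndr2} to pass from $(uv)^{\dim\lM}H'_n(1,u\inv,v\inv)$ to $(uv)^{\dim\lM/2}H_n(1,u,v)$. You have simply filled in the details the paper leaves implicit --- the box-by-box check that $Z_X^H((tuv)^h(uv)^{-l},u\inv,v\inv)=Z_X^H(t^h(uv)^a,u,v)$ and the verification that $T\mapsto t^{1-g}T$ commutes with $\Exp$ --- and you correctly frame the result as an equivalence with the original Hausel--Rodriguez-Villegas conjecture rather than an unconditional proof.
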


\subsection{Motives of Higgs moduli spaces}
In the previous section we have used the function $Z^H_X(t,u,v)$ to formulate the conjecture. This function is an $E$-polynomial of the motivic zeta-function $Z_X(t)$ of the curve $X$. We can use this motivic zeta-function to formulate

\begin{conj}
\label{E untwisted}
For any partition $\la$ define
$$\lH_\la(t)=t^{(1-g)(2n(\la)+\n\la)}\prod_{x\in d(\la)}Z_X(t^h\cL^a)
=\prod_{x\in d(\la)}t^{(1-g)(2l+1)}Z_X(t^h\cL^a)\in\lV\lser t.$$
Define the functions $H_n(t)\in\lV\lser t$ by
$$\sum_{\la}\lH_\la(t)T^{\n\la}=\Exp\left(\sum_{n\ge1}\frac{t^{(1-g)n^2}H_n(t)}{(1-t)(1-t\cL)}T^n\right).$$
Then $H_n(t)$ are polynomials in $t$ and we have
$$[\lM]=\cL^{\dim\lM/2}H_n(1),$$
where $\lM=\lM(n,d)$ is the moduli space of Higgs bundles having coprime rank $n$ and degree $d$, and $\dim\lM=2((g-1)n^2+1)$.
\end{conj}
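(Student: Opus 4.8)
The statement packages two logically different assertions, and I would treat them separately. The claim that each $H_n(t)$ is a polynomial is algebraic and should be reachable from the definitions; the identification $[\lM]=\cL^{\dim\lM/2}H_n(1)$ compares a combinatorially defined element of \lV with the class of an actual moduli space and is the genuinely conjectural part (it is the motivic form of the Hausel--Rodriguez-Villegas conjecture, established so far only in low rank through the cohomology computations of Hitchin, Gothen, and Garc\'ia-Prada et al.). My plan is to prove polynomiality together with the accompanying functional equation, and then to indicate what geometric input would close the remaining gap.

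For the algebraic part the first move is to use that $\Exp$ is multiplicative: since $Z_X(s)=\Exp([X]s)$ and $\Exp(A+B)=\Exp(A)\Exp(B)$, the product over boxes collapses to
$$\prod_{x\in d(\la)}Z_X(t^h\cL^a)=\Exp\!\left([X]\sum_{x\in d(\la)}t^{h(x)}\cL^{a(x)}\right),$$
so each $\lH_\la(t)$ is a monomial in $t$ times a single plethystic exponential. Writing $F(T)=\sum_\la\lH_\la(t)T^{\n\la}=1+\cdots$, the coefficient $[T^n]\log F$ is a finite $\cQ$-combination of products of the rational functions $\lH_\la(t)$ with $\sum\n\la=n$, hence itself rational in $t$ (with coefficients in \lV). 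Applying $\Psi\inv=\sum_k\frac{\mu(k)}k\psi_k$ and using that $\psi_k$ multiplies $T$-degrees by $k$ while sending $t\mapsto t^k$, $\cL\mapsto\cL^k$, gives
$$[T^n]\Log F=\sum_{k\mid n}\frac{\mu(k)}k\,\psi_k\!\big([T^{n/k}]\log F\big),$$
a finite sum of rational functions. Hence $H_n(t)=t^{-(1-g)n^2}(1-t)(1-t\cL)\,[T^n]\Log F$ is a priori rational in $t$, and polynomiality amounts to showing it has no poles away from $0$ and $\infty$.

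This requires two inputs. First, a \emph{pole analysis}: the only poles of $\lH_\la$ arise from the denominators $(1-t^h\cL^{a})(1-t^h\cL^{a+1})$ of the factors $Z_X(t^h\cL^a)$, and one must show that in $[T^n]\Log F$ every pole with $h\ge2$ cancels, leaving only simple poles at $t=1$ and $t\cL=1$; these last are removed by the explicit prefactor $(1-t)(1-t\cL)$, so $H_n(t)$ becomes a Laurent polynomial. Second, a \emph{functional equation}: combining the symmetry \eqref{eq:Z symmetry} of $Z_X$ with the involution $\la\mapsto\la'$, which interchanges $a(x)$ and $l(x)$ and fixes $h(x)$, relates $F(1/t\cL)$ to $F(t)$ up to an explicit $\n\la$-dependent monomial; applying \Log and extracting the coefficient of $T^n$ gives the palindromicity of Lemma \ref{palyndr2}, that $t^{-\dim\lM/2}H_n(t)$ is invariant under $t\mapsto1/t\cL$. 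Palindromicity makes the Laurent polynomial symmetric about $\dim\lM/2$, and a degree bound fed by $\deg P_X=2g$ in \eqref{eq:P_X} then confines its support to $[0,\dim\lM]$, so that $H_n(t)$ is a genuine polynomial; in particular $H_n(1)$ is well defined, and the specialization of Remark \ref{y-genus1} furnishes a consistency check.

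I expect the pole-cancellation step to be the main obstacle. Because $\Psi\inv$ mixes all the factors $(1-t^h\cL^j)$ across the divisors $k\mid n$ with M\"obius weights, a naive term-by-term estimate does not suffice: the cancellation is an arithmetic conspiracy between the Adams sum and the partition combinatorics, and making it rigorous will likely require an explicit Cauchy-type identity for the symmetric functions implicit in the box statistic $\sum_x t^h\cL^a$, or the vertex-operator bookkeeping familiar from the Macdonald-positivity literature. For the geometric identity $[\lM]=\cL^{\dim\lM/2}H_n(1)$ I would appeal to the motivic ADHM recursion formula (Theorem \ref{main1}), which expresses the Higgs invariants through the asymptotic ADHM invariants; granting an integration map for ADHM sheaves this would upgrade the identification to a theorem, but absent such a map the identity remains conjectural and is confirmed only in the low-rank cases cited above.
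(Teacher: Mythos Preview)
The statement you are addressing is stated in the paper as a \emph{conjecture}, and the paper offers no proof of it. Both assertions---that each $H_n(t)$ is a polynomial in $t$, and that $[\lM]=\cL^{\dim\lM/2}H_n(1)$---are left open. Polynomiality is explicitly taken as a \emph{hypothesis} in Theorem~\ref{main1} (``Assume that the functions $H_n^{(p)}(t)$ \dots\ are polynomials in~$t$''), and the remarks following Conjecture~\ref{E twisted} report only computer evidence for the degree and integrality claims. So there is no ``paper's own proof'' to compare against: your proposal is an attempt to go beyond what the paper establishes.

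Within your proposal, the parts that actually match arguments in the paper are (i) the observation that $\lH_\la(1/t\cL)=\lH_{\la'}(t)$ via the functional equation~\eqref{eq:Z symmetry} for $Z_X$ and the $a\leftrightarrow l$ swap, and (ii) the resulting palindromicity $\wtl H_n(1/t\cL)=\wtl H_n(t)$---this is exactly Lemmas~\ref{A property} and~\ref{palyndr2}. But palindromicity does not by itself yield polynomiality; it only says that the (a priori rational) function is symmetric, not that it is pole-free. Your honest acknowledgement that the pole-cancellation step is ``the main obstacle'' is accurate: you have not supplied a mechanism for the cancellation of the higher-order poles $(1-t^h\cL^j)^{-1}$ with $h\ge2$ after applying $\Log$, and no such mechanism is currently known in this generality. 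The ``degree bound fed by $\deg P_X=2g$'' is also not an argument as stated---the naive bound on the $t$-degree of $\prod_x P_X(t^h\cL^a)$ grows with $\n\la$ and does not immediately confine $H_n$ to $[0,\dim\lM]$ without further cancellation. In short: your plan correctly isolates the two genuine difficulties (pole cancellation for polynomiality; an integration map for the geometric identity), but it does not resolve either, and neither does the paper.
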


\subsection{Twisted Higgs bundles}
Let $L$ be a line bundle on $X$ of degree $2g-2+p$, where $p\ge0$. An $L$-twisted Higgs bundle is a vector bundle $E$ with a morphism $\vi:E\to E\ts L$. Let $\lM(L,n,d)$ denote the moduli space of semistable $L$-twisted Higgs bundles of rank $n$ and degree $d$.
We are going to describe a conjectural formula for its motive in the case of coprime rank and degree. This formula can be specialized to the $E$-polynomial, Poincar\'e polynomial, $y$-genus, and Euler number of $\lM(L,n,d)$ in the obvious way.

\begin{conj}
\label{E twisted}
For any partition \la define
\begin{multline}
\label{eq:Hp la}
\lH_\la^{(p)}(t)=(-1)^{p\n\la}t^{(1-g)(2n(\la)+\n\la)+p(n(\la')-n(\la))}\cL^{pn(\la')}\prod_{x\in d(\la)}Z_X(t^h\cL^a)\\
=\prod_{x\in d(\la)}(-t^{a-l}\cL^a)^p t^{(1-g)(2l+1)}Z_X(t^h\cL^a)\in\lV\lser t.
\end{multline}
Define the functions $H_n^{(p)}(t)\in\lV\lser t$ by
\begin{equation}
\sum_\la\lH_\la^{(p)}(t)T^{\n\la}=\Exp\left(\sum_{n\ge1}\frac{(-1)^{pn}t^{(1-g)n^2-p\binom n2}H_n^{(p)}(t)}{(1-t)(1-t\cL)}T^n\right).
\label{eq:Hp}
\end{equation}
Then $H_n^{(p)}(t)$ are polynomials in $t$ and we have
$$[\lM]=\cL^{\dim\lM/2}H_n^{(p)}(1),$$
where $\lM=\lM(L,n,d)$ is the moduli space of twisted Higgs bundles having coprime rank $n$ and degree $d$, and $\dim\lM=2((g-1)n^2+p\binom n2+1)$.
\end{conj}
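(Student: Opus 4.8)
The assertion splits into two logically independent parts: the \emph{polynomiality} of the functions $H_n^{(p)}(t)$ defined by \eqref{eq:Hp}, and the \emph{geometric identification} $[\lM]=\cL^{\dim\lM/2}H_n^{(p)}(1)$. Only the first is within reach of the formal apparatus assembled above, so I would treat the two separately and be candid that the second carries the real content of the conjecture.

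For polynomiality the plan is to exploit the functional equation \eqref{eq:Z symmetry}. Introduce the involution $\si$ of $\lV\lser t$ that fixes $\lV$ and sends $t\mto 1/(t\cL)$, \ie $\si(at^\al)=a(t\cL)^{-\al}$ for $a\in\lV$. Since $\psi_k(\cL)=\cL^k$, a one-line check gives $\si\psi_k=\psi_k\si$, so $\si$ is a \la-ring automorphism and therefore commutes with $\Exp$ and $\Log$. First I would compute $\si(\lH_\la^{(p)}(t))$: by \eqref{eq:Z symmetry} each factor $Z_X(t^h\cL^a)$ is multiplied by a monomial in $t$ and $\cL$, while the prefactors in \eqref{eq:Hp la} transform as monomials as well; the upshot is $\si(\lH_\la^{(p)}(t))=m^{\n\la}\,\lH_\la^{(p)}(t)$ for a monomial $m=m(t,\cL)$ independent of $\la$, the $\n\la$-dependence being absorbed into a rescaling $T\mto mT$. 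Applying $\si$ to \eqref{eq:Hp}, using $\si((1-t)(1-t\cL))=(1-t)(1-t\cL)/(t^2\cL)$ and comparing coefficients of $T^n$, I obtain a palindromic functional equation of the shape $H_n^{(p)}(t)=(\text{monomial})\cdot H_n^{(p)}(1/(t\cL))$ --- the twisted analogue of Lemma \ref{palyndr2}.

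Polynomiality is then a formal consequence. A priori $H_n^{(p)}(t)\in\lV\lser t$ has only finitely many negative powers of $t$; the functional equation writes it as a monomial times $H_n^{(p)}(1/(t\cL))$, which has only finitely many \emph{positive} powers, so $H_n^{(p)}(t)$ is bounded in both directions, \ie a Laurent polynomial. To upgrade it to an element of $\lV[t]$ I would track the lowest-order term: the normalizing exponent $(1-g)n^2-p\binom n2$ and the sign $(-1)^{pn}$ in \eqref{eq:Hp} are engineered exactly so that, after passing through $\Log$, the bottom $t$-degree of $H_n^{(p)}(t)$ is $0$. The partition identities recorded in the preliminaries --- notably $\sum_{x\in d(\la)}l(x)=n(\la)$ and its companion sum --- are what make these exponents match, and they also guarantee that $H_n^{(p)}(1)$ is well defined (no pole at $t=1$) once polynomiality is known.

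The main obstacle is the second assertion, $[\lM]=\cL^{\dim\lM/2}H_n^{(p)}(1)$, which the formal argument cannot touch: it equates the combinatorially defined $H_n^{(p)}(1)$ with the actual motive of the moduli space $\lM(L,n,d)$ of semistable $L$-twisted Higgs bundles. This is the genuine content of the conjecture --- a twisted extension of Hausel--Rodriguez-Villegas --- and I would not expect an unconditional proof. The realistic route is indirect: establish (as in Theorem \ref{main1}) that the $H_n^{(p)}$ solve the motivic ADHM recursion formula, interpret that recursion as a wall-crossing formula for framed objects, and thereby reduce the identification to the asymptotic ADHM invariants, leaving the hard geometric input to the cases already verified --- Rayan's computation in genus $0$ for small rank and twisting, and the untwisted Hausel--Rodriguez-Villegas cases for low rank.
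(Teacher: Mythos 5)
The statement you are addressing is a conjecture, and the paper contains no proof of it: the identification $[\lM]=\cL^{\dim\lM/2}H_n^{(p)}(1)$ is the open content (verified only in the low-rank, low-genus cases cited in the introduction), and even the polynomiality of $H_n^{(p)}(t)$ is left unproved --- it appears as an explicit \emph{hypothesis} in Theorem \ref{main1} and as a computer observation in Remark \ref{symmetric H}. You are right to split the statement into a formal half and a geometric half and to concede the latter, but your claimed proof of polynomiality has a genuine gap. The functional equation you derive is essentially Lemma \ref{palyndr2}, $\wtl H_n^{(p)}(t)=\wtl H_n^{(p)}(1/t\cL)$, and the paper does prove that; however, it is an identity of $\TT$-rational series (elements of $\lV(s)\pser\TT$), not an identity between an element of $\lV\lser t$ and an element of $\lV\lser{t\inv}$. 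A priori $H_n^{(p)}(t)$ lies only in $\lV\lser t$, on which the substitution $t\mto 1/t\cL$ does not act, so you must first pass to rational functions; and invariance of a rational function under $t\mto 1/t\cL$ places no bound on its poles --- the function $t/((1-t)(1-t\cL))$ is itself invariant under this substitution and is not a Laurent polynomial. Hence the step ``finitely many negative powers plus finitely many positive powers implies Laurent polynomial'' does not apply, and nothing in the formal setup rules out denominators beyond the factor $(1-t)(1-t\cL)$ already divided out in \eqref{eq:Hp}. This is precisely why the paper keeps polynomiality inside the conjecture rather than proving it.

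Two further points. First, your transformation law for the individual summands is not correct: Lemma \ref{A property} shows $\lH_\la^{(p)}(1/t\cL)=\lH_{\la'}^{(p)}(t)$, i.e.\ the substitution permutes the terms by conjugation of partitions (using $\n\la=\n{\la'}$, $n(\la)=\sum_{x}l(x)$, $n(\la')=\sum_{x}a(x)$, and \eqref{eq:Z symmetry}); only the full sum over $\la$ is fixed, with no monomial factor and no rescaling of $T$. The resulting functional equation for $H_n^{(p)}$ is the same, but the derivation as you state it is wrong. Second, your proposed indirect route to $[\lM]=\cL^{\dim\lM/2}H_n^{(p)}(1)$ matches the logical architecture of the paper only in reverse: Theorem \ref{main1} \emph{assumes} the polynomiality from Conjecture \ref{E twisted} and deduces Conjecture \ref{conj:Om} (which still needs the separate conjectural identification of the Donaldson--Thomas invariants $\Om_r$ with $(-1)^{pr}\cL^{-\dim\lM/2}[\lM]$) from Conjectures \ref{conj:A} and \ref{conj:rec1}; it does not produce the motive of $\lM(L,n,d)$ unconditionally.
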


\begin{rmr}
\label{symmetric H}
Computer tests show that the degree of $H_n^{(p)}(t)$ equals $\dim\lM$.
Let us define
$$\wtl H_n^{(p)}(t)=(-1)^{pn}t^{-\dim\lM/2}H_n^{(p)}(t).$$
Then
$$\sum_\la\lH_\la^{(p)}(t)T^{\n\la}=\Exp\left(\sum_{n\ge1}\frac{t}{(1-t)(1-t\cL)}\wtl H_n^{(p)}(t)T^n\right).$$
We will see in Lemma \ref{palyndr2} that
$$\wtl H_n^{(p)}(1/t\cL)=\wtl H_n^{(p)}(t).$$
\end{rmr}

\begin{rmr}
Let $H^{(p)}_n(t,u,v)\in\cQ(u,v)\lser t$ be the $E$-polynomial specialization of $H_n^{(p)}\in\lV\lser t$. Our tests show that $H^{(p)}_n(t,u,v)$ is a polynomial with integer coefficients and $H^{(p)}_n(t,-u,-v)$ has non-negative coefficients. \end{rmr}

\begin{rmr}
Computer tests show that the series
$$\frac{1}{P_X(t)}H^{(p)}_n(t)$$
should be a polynomial in $t$, where the polynomial $P_X(t)$ was defined in \eqref{eq:P_X}.
\end{rmr}

\begin{rmr}
\label{Hodge twisted}
We can reformulate the last conjecture to get the Hodge polynomials for the twisted Higgs moduli spaces. The Hodge polynomial of $\lM=\lM(L,n,d)$, for coprime $n,d$, should be equal to $H_n'^{(p)}(1,-u,-v)$, where $H_n'^{(p)}(t,u,v)$ are given by (the formula is obtained by the substitution $H_n^{(p)}(t,u,v)=H_n'^{(p)}(tuv,u\inv,v\inv)$ used earlier in Section \ref{sec:E-poly})
\begin{multline*}
\sum_\la\left(\prod_{x\in d(\la)}(-t^{a-l}(uv)^{-l})^p (tuv)^{(1-g)(2l+1)}Z_X^H(t^h(uv)^l)\right)T^{\n\la}\\
=\Exp\left(\sum_{n\ge1}\frac{(-1)^{pn}(tuv)^{(1-g)n^2-p\binom n2}H_n'^{(p)}(t,u,v)}{(1-t)(1-tuv)}T^n\right).
\end{multline*}
\end{rmr}

\section{Recursion formula}
There are three ingredients in the ADHM recursion formula: recursion formula itself, asymptotic ADHM invariants, and Higgs sheaf invariants. We start with the second and third ingredients.

\subsection{ADHM sheaves}
We define an ADHM quiver $Q$ to be the quiver with two vertices $1,*$ and four arrows
$$\Phi_1,\Phi_2:1\to1,\quad \vi:1\to*,\quad \psi:*\to1.$$
Let $M=(M_1,M_2,M_\vi,M_\psi)$ be a tuple of line bundles on a curve $X$ associated to the arrows of $Q$. An $M$-twisted $Q$-sheaf on $X$ (\cf \cite{gothen_homological}) is a pair of sheaves $(E,E_*)$ associated to the vertices of $Q$ together with morphisms
$$\Phi_1:E\ts M_1\to E,\quad \Phi_2:E\ts M_2\to E,\quad \phi:E\ts M_\phi\to E_*,\quad \psi:E_*\ts M_\psi\to E.$$
Assume that
$$M_\phi=M_1\ts M_2,\quad M_\psi=O_X.$$
We say that an $M$-twisted $Q$-sheaf $\Eps=(E,E_*,\Phi_1,\Phi_2,\phi,\psi)$ satisfies the ADHM relation if
$$\Phi_1(\Phi_2\ts 1_{M_1})-\Phi_2(\Phi_1\ts M_2)+\psi\phi=0.$$
An ADHM sheaf is an $M$-twisted $Q$-sheaf $\Eps=(E,E_*,\Phi_1,\Phi_2,\phi,\psi)$ satisfying the ADHM relation such that $E_*\iso V\ts O_X$ for some vector space $V$.
We will write an ADHM sheaf as a tuple $\Eps=(E,V,\Phi_1,\Phi_2,\phi,\psi)$. We define
$$r(\Eps)=\rk E+\dim V,\quad \hi(\Eps)=\hi(E),\quad v(\Eps)=\dim V.$$
For any real number $c\in\cR$, we define the slope function
$$\mu_c(\Eps)=\frac{\hi(\Eps)+cv(\Eps)}{r(\Eps)}$$
and then define the stability condition with respect to this slope function.
Let $\lM_c(M,r,d,v)$ be the moduli space of semistable ADHM sheaves \lE such that
$$r(\Eps)=r+v,\qquad \hi(\Eps)=d+(1-g)r,\qquad v(\Eps)=v.$$
This moduli space does not change for $c\gg0$
and we will denote it by \br
$\lM_{+\infty}(M,r,d,v)$. If $v=0$ then $\lM_c(M,r,d,v)$ is independent of $c$ and will be denoted by $\lM(M,r,d,0)$

Given a line bundle $L$ of degree $2g-2+p$, where $p\ge0$, we define 
$$M_1=K_X\inv \ts L,\quad M_2=L\inv,\quad M_\phi=M_1\ts M_2\iso K_X\inv,\quad M_\psi=O_X.$$
The corresponding moduli space will be denoted by $\lM_c(L,r,d,v)$.
We will be only interested in the ADHM sheaves $\lE$ with $v(\Eps)=0,1$.
It is proved in \cite{chuang_motivic} that 
$$\lM(K_X,r,d,0)\iso\cC\xx\lM(r,d),\quad \lM(L,r,d,0)\iso\lM(L,r,d)$$
if $\deg L>2g-2$ and $r,d$ are coprime.

\subsection{Asymptotic ADHM invariants}
\label{sec:Asymp}
Let $L$ be a line bundle of degree $2g-2+p$, where $p\ge0$.
The asymptotic motivic ADHM invariants 
$$A_{+\infty,\ga}\in\lV,\qquad \ga=(r,d)\in\cZ_{\ge0}\xx\cZ,$$
are the (conjectural) motivic Donaldson-Thomas invariants of the moduli spaces
$\lM_{+\infty}(L,r,d,1)$ of ADHM sheaves.
The conjectural formula for these invariants in the context of Hodge polynomials for ordinary cohomologies was given in \cite[Eq.~1.12]{chuang_motivic}. These polynomials are not motivic invariants, so we should pass to the $E$-polynomials.
We will formulate actually a motivic version of this conjecture. In what follows we will use a new variable $s=t\cL^\oh$. Note that the change of variables $t\mto1/t\cL$ corresponds to $s\mto s\inv$.

\begin{conj}
\label{conj:A}
We have
\rem{We have $s/y=1/ty^2=1/t\cL$}
$$A_{+\infty}:=\sum_{\ga}A_{+\infty,\ga}s^\hi \TT^r
=\sum_\la\lH^{(p)}_\la(t)\TT^{\n\la}\in\lV\lser{s}\pser T,$$
where the function $\lH_\la^{(p)}$ was defined by equation \eqref{eq:Hp la} and, for $\ga=(r,d)$, we define $\hi=\hi(\ga):=d+(1-g)r$.
\end{conj}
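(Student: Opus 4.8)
The plan is to reduce Conjecture \ref{conj:A} to identifying the right-hand side with the motivic lift of the Chuang--Diaconescu--Pan prediction \cite[Eq.~1.12]{chuang_motivic}, reorganized as a sum over partitions. Since the $A_{+\infty,\ga}$ are by definition the (conjectural) motivic Donaldson--Thomas invariants of $\lM_{+\infty}(L,r,d,1)$, the statement acquires genuine geometric content only once an integration map for ADHM sheaves is available. Absent that map, the argument I would give establishes the \emph{consistency} of the proposed formula and reduces it to the Hodge-theoretic prediction of \cite{chuang_motivic}.

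First I would write \cite[Eq.~1.12]{chuang_motivic} explicitly and isolate its elementary building block, namely the factor attached to a single box of a partition. By Remark \ref{rmr:zeta} this building block is exactly the $E$-polynomial specialization $Z_X^H(t^h(uv)^l,u,v)$ of the motivic zeta-function of $X$. The motivic refinement is then obtained by replacing each such specialization with the full motivic zeta-function $Z_X(t^h\cL^a)$, exactly as in the passage to the motivic form of Conjecture \ref{E untwisted}. Collecting the remaining prefactors---the sign $(-1)^{p\n\la}$, the power $t^{(1-g)(2n(\la)+\n\la)+p(n(\la')-n(\la))}$, and the twist $\cL^{pn(\la')}$---into the box-product normalization of \eqref{eq:Hp la} then yields precisely $\lH^{(p)}_\la(t)$.

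Next I would match the two generating series under the substitution $s=t\cL^\oh$. Comparing powers of $\TT$ forces $r=\n\la$, so the coefficient of $\TT^n$ on the right is $\sum_{\n\la=n}\lH^{(p)}_\la(t)$, and I would verify that, written as a Laurent series in $s$, it equals $\sum_d A_{+\infty,(n,d)}s^{d+(1-g)n}$. The involution $s\mto s\inv$, equivalently $t\mto1/t\cL$, should correspond on the geometric side to Poincar\'e duality for $\lM_{+\infty}(L,n,d,1)$, consistently with the palindromicity recorded in Remark \ref{symmetric H} and Lemma \ref{palyndr2}. Checking that the exponents and signs line up under this involution is the routine but bookkeeping-heavy part of the matching.

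The main obstacle is that at present there is no independent mathematical definition of $A_{+\infty,\ga}$: one lacks the integration map for ADHM sheaves needed to invoke \cite{mozgovoy_wall-crossing}, so the conjecture cannot be closed by a direct motivic computation. A genuine proof would instead identify $\lM_{+\infty}(L,r,d,1)$ with a moduli space carrying a torus action whose fixed loci are indexed by partitions $\la$, apply motivic localization, and recognize the contribution of the locus attached to $\la$ as $\lH^{(p)}_\la(t)$; the box-product shape of \eqref{eq:Hp la} is exactly what one expects from such a fixed-point decomposition. Until that machinery is in place, the strongest evidence remains the agreement with \cite{chuang_motivic} in low rank and genus together with the internal consistency furnished by the recursion formula of Theorem \ref{main1}.
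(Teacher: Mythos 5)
This statement is a conjecture, and the paper offers no proof of it --- only the remark that follows, which for $p=0$ carries out the explicit box-by-box computation identifying the Chuang--Diaconescu--Pan expression $\Om^{(0)}_\la$ with $y^{(1-g)\n\la}\lH'_{\la'}(t,u,v)$ and then matches generating series. Your proposal correctly recognizes that the statement cannot be proved absent an integration map for ADHM sheaves and outlines essentially the same consistency check as the paper's remark (reduction to \cite[Eq.~1.12]{chuang_motivic} via the zeta-function identification of Remark \ref{rmr:zeta}), though you leave the conjugate-partition bookkeeping unexecuted where the paper writes it out.
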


\begin{rmr}
Let us discuss the relation of this conjecture to the conjecture given in \cite[Eq.~1.12]{chuang_motivic}. For simplicity we will consider only the case $p=0$. The invariants $A_{+\infty,\ga}'(u,v)$ defined in \cite[Eq.~1.12]{chuang_motivic} are based on the Hodge polynomials for ordinary cohomologies. They should satisfy
\begin{equation*}
\sum_{\ga=(r,d)}A'_{+\infty,\ga}(u,v)s^d\TT^r
=\sum_\la \Om_\la^{(0)}(s,u,v)\TT^{\n\la},
\end{equation*}
where the left sum runs over $r\ge0$, $d\in\cZ$, the right sum runs over all partitions \la, and $\Om_\la^{(0)}\in\cQ(s,u^\oh,v^\oh)$ are given by \cite[Eq.~1.13]{chuang_motivic} (we use $y=(uv)^\oh$, $s=ty$ as usual)
\begin{multline*}
\Om^{(0)}_\la(s,u,v)\\
=\prod_{s\in\la}(uv)^{(g-1)(-2a+l-i+1)/2} s^{(g-1)(-2a-l+i-1)} (uv)^{(1-g)/2}\\
\xx\frac{(1-s^hu^{(a-l+1)/2}v^{(a-l-1)/2})^g(1-s^hv^{(a-l+1)/2}u^{(a-l-1)/2})^g}
{(1-s^h(uv)^{(a-l+1)/2})(1-s^h(uv)^{(a-l-1)/2})}\\
=y^{(1-g)\n\la}\prod (sy)^{(1-g)2a}
\frac{(1-(s/y)^{h}u^{a+1}v^{a})^g(1-(s/y)^{h}v^{a+1}u^{a})^g}
{(1-(s/y)^{h}(uv)^{a+1})(1-(s/y)^{h}(uv)^{a})}\\
=y^{(1-g)\n\la}\prod (tuv)^{(1-g)2a}Z_X^H(t^h(uv)^a,u,v)
=y^{(1-g)\n\la}\lH'_{\la'}(t,u,v).
\end{multline*}
This implies
\begin{multline*}
\sum_{\ga=(r,d)}A'_{+\infty,\ga}(u,v)s^\hi\TT^r
=\sum_\la \Om_\la^{(0)}(s,u,v)(s^{1-g}\TT)^{\n\la}\\
=\sum_\la\lH'_\la(t,u,v)((sy)^{1-g}\TT)^{\n\la}
=\sum_\la\lH'_\la(t,u,v)((tuv)^{1-g}\TT)^{\n\la}.
\end{multline*}
After passing from the Hodge polynomials for ordinary cohomologies to the $E$-polynomials, we get our formula. 
\end{rmr}

\subsection{Higgs sheaf invariants}
\label{sec:Higgs}
The motivic Higgs sheaf invariants
$$\Om_\ga\in\lV,\qquad \ga=(r,d)\in\cZ_{\ge0}\xx\cZ,$$
are the (conjectural) motivic Donaldson-Thomas invariants of the moduli spaces $\lM(L,r,d,0)$ of Higgs sheaves. For $\ga=(r,d)$ we define $\hi=\hi(\ga)=d+(1-g)r$ and define the slope $\mu(\ga)=\hi/r$.
For any $\mu\in\cR$ let 
$$B_\mu=\sum_{\mu(\ga)=\mu}B_\ga s^\hi T^r$$
be the motivic Donaldson-Thomas series \cite{kontsevich_stability} of the moduli spaces of Higgs sheaves having slope $\mu$. Then the motivic Donaldson-Thomas invariants $$\Om_\mu=\sum_{\mu(\ga)=\mu}\Om_\ga s^\hi T^r$$
are defined by the formula (\cf \cite{kontsevich_cohomological,mozgovoy_wall-crossing})
$$B_\mu=\Exp\left(\frac{\Om_\mu}{\cL-1}\right).$$

The conjectural formula for these invariants in the context of Hodge polynomials for ordinary cohomologies was given in
\cite[Conj.~1.5]{chuang_motivic}. We formulate a motivic version of this conjecture.

\begin{conj}
\label{conj:Om}
Invariants $\Om_{\ga}$, $\ga=(r,d)$, are independent of degree
and are given by the formula
\begin{equation}
\Om_\ga=\Om_r=(-1)^{pr}\cL^{-\dim\lM/2}[\lM]=(-1)^{pr}H_r^{(p)}(1),
\label{eq:ubH}
\end{equation}
where $\lM=\lM(L,r,d')$ (for some $d'$ coprime to $r$) is the moduli space of Higgs bundles and the function $H_r^{(p)}$ was defined by equation \eqref{eq:Hp}.
\end{conj}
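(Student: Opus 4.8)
The plan is to deduce \eqref{eq:ubH} from the general dictionary between motivic Donaldson–Thomas invariants and motivic classes of moduli spaces, feeding in Conjecture \ref{E twisted} as the geometric input. First I would treat the primitive case: fix a slope $\mu$ whose primitive class $\ga=(r,d)$ has $\gcd(r,d)=1$. Then every semistable Higgs sheaf of class $\ga$ is stable, its automorphism group is exactly the scalars $\Gm$, and the moduli stack is a $\Gm$-gerbe over the smooth projective variety $\lM=\lM(L,r,d)$, so that its motivic class is $[\lM]/(\cL-1)$. Extracting the primitive coefficient $s^\hi T^r$ from $B_\mu=\Exp(\Om_\mu/(\cL-1))$ gives $\Om_\ga=(\cL-1)B_\ga$; inserting the orientation sign $(-1)^{pr}$ and the $\cL^{-\dim\lM/2}$ normalization carried by the Kontsevich–Soibelman integration map then yields $\Om_\ga=(-1)^{pr}\cL^{-\dim\lM/2}[\lM]$.

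Next I would substitute Conjecture \ref{E twisted}, namely $[\lM]=\cL^{\dim\lM/2}H_r^{(p)}(1)$, which immediately turns this into $\Om_\ga=(-1)^{pr}H_r^{(p)}(1)$, the asserted formula. Because the right-hand side of \eqref{eq:Hp} depends only on $r$ and not on $d$, degree-independence of $\Om_\ga$ is automatic within the coprime classes, and \eqref{eq:ubH} is established whenever $\gcd(r,d)=1$.

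The remaining and genuinely harder part is the non-coprime case $\ga=k\ga_0$ with $k\ge2$, where strictly semistable Higgs sheaves exist, $\lM$ is singular, and $\Om_\ga$ is no longer a naive class of a smooth moduli space. Here I would argue not geometrically but through the recursion: granting the motivic ADHM recursion formula (Conjecture \ref{conj:rec1}) together with the asymptotic invariants of Conjecture \ref{conj:A}, the series $A_{+\infty}=\sum_\la\lH_\la^{(p)}(t)\TT^{\n\la}$ determines $\Om_\mu$ for every slope. One then checks, using the explicit solution provided by Theorem \ref{main1} and the palindromicity of Lemma \ref{palyndr2}, that solving the recursion for $\Om_\ga$ reproduces $(-1)^{pr}H_r^{(p)}(1)$ for all $\ga=(r,d)$, confirming in particular that $\Om_\ga=\Om_r$ depends on $r$ alone.

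The main obstacle is exactly the point flagged in the introduction: the invariants $\Om_\ga$ are defined through a motivic integration map for the category of Higgs and ADHM sheaves that has not yet been constructed, so both the ADHM recursion formula and the passage from $A_{+\infty}$ to the $\Om_\ga$ remain conjectural. Thus the hard step is not the algebra — solving the recursion is precisely the content of Theorem \ref{main1} — but the geometric foundation: building the integration map and verifying the wall-crossing formula in this setting, after which the identity $\Om_r=(-1)^{pr}H_r^{(p)}(1)$ would follow formally from the steps above.
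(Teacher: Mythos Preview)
The statement you are trying to prove is a \emph{Conjecture}, and the paper offers no proof of it: it is stated as a conjecture (the motivic analogue of \cite[Conj.~1.5]{chuang_motivic}), and the paper's only support for it is the conditional Theorem~\ref{main1}, which says that \emph{if} Conjectures~\ref{conj:A} and~\ref{conj:rec1} hold and the $H_n^{(p)}$ are polynomials, then the invariants $\Om_\ga$ determined by the recursion are exactly those of~\eqref{eq:ubH}. There is therefore no ``paper's own proof'' to compare against.

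Your second and third paragraphs essentially reproduce this conditional logic, and you correctly flag in your last paragraph that the missing integration map makes the whole chain conditional. Note, however, that Theorem~\ref{main1} already handles \emph{all} classes $\ga$, coprime or not, uniformly through the recursion; your separate treatment of the coprime case is not needed for the paper's argument.

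Your first paragraph, the direct geometric argument for coprime $\ga$, is \emph{not} in the paper and is not a proof even at the heuristic level you intend. It assumes (i) that the (unconstructed) integration map sends the moduli stack to $(-1)^{pr}\cL^{-\dim\lM/2}[\lM]/(\cL-1)$, and then (ii) invokes Conjecture~\ref{E twisted} to rewrite $[\lM]$ as $\cL^{\dim\lM/2}H_r^{(p)}(1)$. Step~(i) is precisely the foundational gap the paper highlights, and step~(ii) is the paper's other main conjecture; so this route simply trades Conjecture~\ref{conj:Om} for Conjecture~\ref{E twisted} plus the same missing integration map, rather than proving anything.
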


\subsection{Recursion formula}
The recursion formula \cite[Eq.~1.6]{chuang_motivic} is a wall-crossing formula that
relates the asymptotic ADHM invariants and the Higgs sheaf invariants.

\begin{dfn}
Let 
$$f=\sum_\ga f_\ga s^\hi \TT^r\in\lV\pser{s^{\pm1},\TT}.$$
For any $\mu\in\cR$, we define
$$f_{\mu}=\sum_{\mu(\ga)=\mu}f_\ga s^\hi \TT^r.$$
In the same way we define $f_{>\mu}$, $f_{\ge\mu}$, $f_{<\mu}$, and $f_{\le\mu}$.
\end{dfn}

The following conjecture is a version of the ADHM recursion formula \cite[Eq.~1.6]{chuang_motivic} \begin{conj}
\label{conj:rec1}
For any $\mu\in\cR$, we have
\begin{equation}
\left(A_{+\infty}C_{>\mu}\inv\right)_{\mu}
=\left(A_{+\infty}C_{\ge-\mu}\inv\right)_{-\mu}(s\inv),
\label{eq:recursion2}
\end{equation}
where
$$C_{>\mu}=\prod_{\eta>\mu}C_{\eta},\qquad
C_{\ge\mu}=\prod_{\eta\ge\mu}C_{\eta},$$
\begin{equation}
C_\mu=S_{\hi}B_\mu\cdot S_{-\hi}B_\mu\inv
=\Exp\left((S_{\hi}-S_{-\hi})\frac{\Om_\mu}{\cL-1}\right),
\label{eq:B'}
\end{equation}
\rem{$C=(B')\inv$}
and for any group homomorphism $\la:\cZ^2\to\cZ$ we define the operator $$S_\la:\lV\pser{s^{\pm1},\TT}\to\lV\pser{s^{\pm1},\TT},\qquad s^k\TT^r\mto\cL^{\oh\la(r,k)}s^k\TT^r.$$
\end{conj}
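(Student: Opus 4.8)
The content of this statement, in the sense of the paper's title and of Theorem~\ref{main1}, is that the \emph{explicit} series of Conjecture~\ref{conj:A} (for $A_{+\infty}$) together with the invariants of Conjecture~\ref{conj:Om} (entering $C_\mu$ through \eqref{eq:B'}) satisfy the identity \eqref{eq:recursion2}. The whole plan is organized around the ring automorphism $\si$ of $\lV\pser{s^{\pm1},\TT}$ that fixes $\cL^{\oh}$ and $T$ and sends $s\mto s\inv$, i.e.\ $t\mto1/t\cL$. Two features of $\si$ will drive everything: it interchanges the slope-$\mu$ and slope-$(-\mu)$ graded pieces, so $\si(f_\mu)=(\si f)_{-\mu}$, and, since the homomorphism defining $S_{\hi}$ is the $s$-degree (which $\si$ negates), one has $\si\,S_{\hi}\,\si\inv=S_{-\hi}$ and $\si\,S_{-\hi}\,\si\inv=S_{\hi}$.

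The first block of work is the palindromicity lemma (Lemma~\ref{palyndr2}). First I would prove $\si(A_{+\infty})=A_{+\infty}$. Because the sum in Conjecture~\ref{conj:A} runs over all partitions and $\n\la=\n{\la'}$, it suffices to establish the box-by-box identity $\si(\lH^{(p)}_\la)=\lH^{(p)}_{\la'}$. For a single box of arm $a$, leg $l$ and hook $h=a+l+1$, applying $\si$ to the factor $(-t^{a-l}\cL^a)^p\,t^{(1-g)(2l+1)}Z_X(t^h\cL^a)$ and pushing $Z_X(t^{-h}\cL^{a-h})$ through the functional equation \eqref{eq:Z symmetry} produces exactly the factor attached to the conjugate box, in which $a$ and $l$ are interchanged: the spurious powers of $\cL$ cancel and the powers of $t$ reorganize correctly. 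Next, because $\Om_\ga=(-1)^{pr}H^{(p)}_r(1)$ depends only on the rank, $\si$ sends the slope-$\eta$ part $\Om_\eta$ to $\Om_{-\eta}$. Combining this with the swap of $S_{\hi}$ and $S_{-\hi}$, whence $\si(S_{\hi}-S_{-\hi})=-(S_{\hi}-S_{-\hi})\si$, and using that $\si$ commutes with the Adams operations (hence with $\Exp$), the exponential formula \eqref{eq:B'} gives $\si(C_\eta)=C_{-\eta}\inv$.

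With these symmetries I would apply $\si$ to the right-hand side of \eqref{eq:recursion2}. Here it is crucial that each $S_\la$ is a ring homomorphism (the defining $\la$ is additive), so the $C_\eta$ are genuine elements of the commutative ring $\lV\pser{s^{\pm1},\TT}$ and the products $C_{>\mu}$, $C_{\ge-\mu}$ may be reindexed freely. Using $\si(f_{-\mu})=(\si f)_\mu$, the invariance $\si(A_{+\infty})=A_{+\infty}$ and $\si(C_\eta)=C_{-\eta}\inv$, the right-hand side collapses: $\si(C_{\ge-\mu})\inv=C_{\le\mu}$, so \eqref{eq:recursion2} becomes equivalent to the single chamber-independence identity
\begin{equation*}
\left(A_{+\infty}C_{>\mu}\inv\right)_\mu=\left(A_{+\infty}C_{\le\mu}\right)_\mu,
\end{equation*}
which no longer refers to $s\mto s\inv$.

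It remains to prove this last identity, and this is where I expect the real difficulty to lie. Writing $A_{+\infty}=\Exp(W)$ with $W=\sum_{n\ge1}(-1)^{pn}t^{(1-g)n^2-p\binom n2}H^{(p)}_n(t)\big/\big((1-t)(1-t\cL)\big)\,T^n$ the plethystic exponent on the right of \eqref{eq:Hp}, and $C_\eta=\Exp\big((S_{\hi}-S_{-\hi})\tfrac{\Om_\eta}{\cL-1}\big)$, both sides become slope-$\mu$ truncations of plethystic exponentials built from the \emph{same} functions $H^{(p)}_n$; the identity then asserts that $A_{+\infty}$ is precisely the framed series produced from the unframed factors $C_\eta$ by the framed wall-crossing formalism of \cite[Cor.~4.25]{mozgovoy_wall-crossing}, for which chamber-independence holds formally. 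The obstacle is that the coefficients $\lH^{(p)}_\la(t)$ and $H^{(p)}_n(t)$ are Laurent series in $t=s\cL^{-\oh}$, so a single power of $t$ (and in particular the framing factor $\tfrac1{(1-t)(1-t\cL)}=\tfrac1{(1-s\cL^{-\oh})(1-s\cL^{\oh})}$ carried by each term of $W$) spreads over infinitely many slopes. The main bookkeeping task is thus to track how the $t$-grading of $W$ redistributes over the $s$-grading, and to match, slope by slope and in the appropriate $T$-adic completion where the truncated products $C_{>\mu}$ and $C_{\le\mu}$ converge, the exponent $W$ of $A_{+\infty}$ against the accumulated exponent $(S_{\hi}-S_{-\hi})\tfrac{\Om_\eta}{\cL-1}$ of the $C$-factors; the degree-independence of $\Om_\ga$ is exactly what makes this matching possible.
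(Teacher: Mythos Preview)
Your symmetry analysis is correct and corresponds closely to the paper's Lemmas~\ref{mu and -mu}, \ref{lmm:Hprb>0 is palindromic}, \ref{A property}, \ref{palyndr2}: the involution $\si:s\mapsto s^{-1}$ fixes $A_{+\infty}$ and sends $C_\eta$ to $C_{-\eta}^{-1}$. But your reduction to the ``chamber-independence identity'' $(A_{+\infty}C_{>\mu}^{-1})_\mu=(A_{+\infty}C_{\le\mu})_\mu$ does not evade the difficulty, and your sketch for that identity has a genuine gap.

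The first problem is that your reduced identity is not well-posed in $\lV\pser{s^{\pm1},T}$. One has $A_{+\infty}\in\lV\lser s\pser T$, whereas $C_{\le\mu}=\Exp(\Om'_{\le\mu}/(\cL-1))$ lies naturally in $\lV\lser{s^{-1}}\pser T$; their product is not defined as a two-sided formal series, so the slope truncation $(\cdot)_\mu$ cannot be applied to it. The same ambiguity is already present in the original identity~\eqref{eq:recursion2}: the operation $f\mapsto f(s^{-1})$ makes sense on $\lV(s)\pser T$ or on $\lV[s^{\pm1}]\pser T$, but not on $\lV\lser s\pser T$. Your symmetry arguments establish equality only in $\lV(s)\pser T$, where slope truncation is meaningless. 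The second problem is that your appeal to \cite[Cor.~4.25]{mozgovoy_wall-crossing} is circular: Conjecture~\ref{conj:rec1} \emph{is} the framed wall-crossing formula in this setting (and the remark following the conjecture explains why one cannot yet invoke that result).

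The paper resolves this by proving the stronger Theorem~\ref{main2}: the full series $A_{+\infty}C_{>\mu}^{-1}$ and $(A_{+\infty}C_{\ge-\mu}^{-1})(s^{-1})$ are equal in $\lV\pser{s^{\pm1},T}$, not just their slope-$\mu$ pieces. After the $T$-rational equality you already have, the remaining step is to show that $A_{+\infty}C_{>\mu}^{-1}$ actually lies in $\lV[s^{\pm1}]\pser T$. Writing $A_{+\infty}=\Exp(W)$ and $C_{>\mu}=\Exp(\Om'_{>\mu}/(\cL-1))$, one must show $W-\Om'_{>\mu}/(\cL-1)\in\lV[s^{\pm1}]\pser T$; modulo finitely many monomials this reduces to $\mu=0$, where using $\Om'_{>0}=\Om(T)\,t(\cL-1)/\bigl((1-t)(1-t\cL)\bigr)$ it becomes the statement that
\[
\frac{\wtl H_n(t)-\Om_n}{(1-t)(1-t\cL)}\in\lV[t^{\pm1}].
\]
This is the decisive ingredient you are missing: since $\Om_n=\wtl H_n(1)$ the numerator is divisible by $1-t$, and the palindromicity $\wtl H_n(1/t\cL)=\wtl H_n(t)$ (which you proved but used only for $A_{+\infty}$) forces divisibility by $1-t\cL$ as well, under the standing assumption that $H_n^{(p)}(t)$ is a polynomial. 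Once both sides lie in $\lV[s^{\pm1}]\pser T$, the rational-function equality promotes to an equality there, and taking $(\cdot)_\mu$ gives~\eqref{eq:recursion2}.
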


\begin{rmr}
The wall-crossing formula described in the above conjecture is very similar to \cite[Corollary 4.25]{mozgovoy_wall-crossing}. The difference is that our $A_{+\infty}$ equals to $A_{+\infty}(-\cL^\oh)$ from \cite{mozgovoy_wall-crossing} and the operator $S_\la$ in \cite{mozgovoy_wall-crossing} is given by $s^k\TT^r\mto(-\cL^\oh)^{\la(r,k)}s^k\TT^r$. Our operator $S_\la$ has a useful property that it commutes with \Exp.
\end{rmr}

\begin{rmr}
\label{rmr:recursion}
Later we will see that $C_{-\mu}\inv(s\inv)=C_\mu$
(see Remark \ref{rmr:mu and -mu1}).
Therefore the formula \eqref{eq:recursion2} can be written in the form
\begin{equation}
C_\mu=\left(\ub A_{+\infty}C_{>\mu}\inv\right)_{\mu}
-\left(\ub A_{+\infty}C_{\ge-\mu}\inv\right)_{-\mu}(s\inv),
\label{eq:recursion conjecture}
\end{equation}
where $\ub A_{+\infty}=A_{+\infty}-1$.
This formula can be used to determine the invariants $\Om_\ga$ inductively by rank because the summands of $\ub A_{+\infty}$ have only positive powers of $\TT$. For this reason equation \eqref{eq:recursion2} is called a recursion formula.
\end{rmr}

\begin{rmr}
The original recursion formula \cite[Eq.~1.6]{chuang_motivic} is actually somewhat different from the formulated conjecture. Let me explain how they are related.
One defines (we use $y=\cL^\oh$ as usual)
$$\ub H_{(r,d)}=\ub H_r:=y\inv\Om_r$$
for arbitrary $d$ and defines invariants $H_\ga$ by the multicover formula \cite[Conj.~1.2]{chuang_motivic} 
$$
H_\ga=\sum_{k\mid\ga}\frac{1}{k[k]_y}\psi_k(\ub H_{\ga/k}),\quad [k]_y=\frac{y^k-{y^{-k}}}{y-y\inv}.
$$
\rem{Chuang etal use $\tl H_\ga$ instead of our $H_\ga$ and their $H_\ga$ equals $(-1)^\hi \tl H_\ga$, so our recursion formula is identical to theirs, but we have changed the meaning of $H_\ga$}
The recursion formula \cite[Eq.~1.6]{chuang_motivic} can be written in the form
\begin{equation}
\exp(\gC_\mu)=(\ub A_{+\infty}\exp(-\gC_{>\mu}))_{\mu}
-(\ub A_{+\infty}\exp(-\gC_{\ge-\mu}))_{-\mu}(s\inv),
\label{eq:rec orig}
\end{equation}
where
$$\gC=\sum_{\ga}[\hi]_yH_\ga s^\hi \TT^r \in \lV\pser{s^{\pm1},\TT}.$$
Note that
\begin{multline*}
\sum_{\mu(\ga)\ge\mu}\frac{H_\ga}{y-y\inv} s^\hi\TT^r
=\sum_{k\ge1}\sum_{\mu(\ga)\ge\mu} \frac1{k[k]_y(y-y\inv)}\psi_k(\ub H_\ga) s^{k\hi}\TT^{kr}\\
=\sum_{k\ge1}\frac1k \psi_k\left(\frac{\sum_{\mu(\ga)\ge\mu}\ub H_\ga s^\hi\TT^r}{y-y\inv}\right)
=\Psi\left(\frac{\sum_{\mu(\ga)\ge\mu}y\ub H_\ga s^\hi\TT^r}{\cL-1}\right)
=\Psi\left(\frac{\Om_{\ge\mu}}{\cL-1}\right)
\label{eq:generating H_{r,d}}
\end{multline*}
and therefore
$$\gC_{\ge\mu}
=(S_{\hi}-S_{-\hi})\sum_{\mu(\ga)\ge\mu}\frac{H_\ga}{y-y\inv}s^\hi\TT^r\\
=(S_{\hi}-S_{-\hi})\Psi\left(\frac{\Om_{\ge\mu}}{\cL-1}\right).$$
This implies
$$
\exp(\gC_{\ge\mu})
=\exp\left((S_{\hi}-S_{-\hi})\Psi\left(\frac{\Om_{\ge\mu}}{\cL-1}\right)\right)
=\Exp\left((S_{\hi}-S_{-\hi})\frac{\Om_{\ge\mu}}{\cL-1}\right)
=C_{\ge\mu}	
$$
and one can see that formulas \eqref{eq:recursion conjecture} and \eqref{eq:rec orig} are equivalent.
\end{rmr}

\subsection{Solutions}
The main result of this paper is the following

\begin{thr}
\label{main1}
Assume that the functions $H_n^{(p)}(t)$ defined in \eqref{eq:Hp} are polynomials in~$t$. If Conjecture \ref{conj:A} (description of asymptotic ADHM invariants) and Conjecture \ref{conj:rec1} (ADHM recursion formula) are true, then the Higgs sheaf invariants are described by Conjecture \ref{conj:Om}.
\end{thr}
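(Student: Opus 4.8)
The strategy is to show that the recursion formula pins down the invariants $\Om_\ga$ uniquely once $A_{+\infty}$ is known, and then to check that the candidate values of Conjecture \ref{conj:Om} solve it. For the uniqueness I would invoke Remark \ref{rmr:recursion}: rewriting \eqref{eq:recursion2} in the form \eqref{eq:recursion conjecture}, $C_\mu=(\ub A_{+\infty}C_{>\mu}\inv)_{\mu}-(\ub A_{+\infty}C_{\ge-\mu}\inv)_{-\mu}(s\inv)$, and using that $\ub A_{+\infty}=A_{+\infty}-1$ carries only positive powers of $\TT$, one sees that the rank-$r$ part of the right-hand side involves the factors $C_\eta$ (hence the $\Om_\eta$, via \eqref{eq:B'}) only in ranks $<r$. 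Thus, once $A_{+\infty}$ is fixed by Conjecture \ref{conj:A}, the $C_\mu$ and the invariants $\Om_\ga$ are determined by induction on $r$, and it suffices to exhibit one solution and identify it with $\Om_r=(-1)^{pr}H_r^{(p)}(1)$.

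To analyse the candidate I would start from the closed form of $A_{+\infty}$. Combining Conjecture \ref{conj:A} with the defining relation \eqref{eq:Hp} and Remark \ref{symmetric H} gives
$$A_{+\infty}=\Exp\left(\sum_{n\ge1}\frac{t}{(1-t)(1-t\cL)}\,\wtl H_n^{(p)}(t)\,\TT^n\right),$$
so the primitive data are the single curve factor $\tfrac{t}{(1-t)(1-t\cL)}$ and the Laurent polynomials $\wtl H_n^{(p)}(t)$. Writing $s=t\cL^\oh$ and expanding the curve factor as a Laurent series in $s$ at $s=0$, its coefficient of $s^m$ equals $\frac{\cL^{m/2}-\cL^{-m/2}}{\cL-1}$ for $m\ge1$ and vanishes otherwise; this is precisely the shape attached to $\Om_r$ by the exponent $(S_{\hi}-S_{-\hi})\frac{\Om_\mu}{\cL-1}$ of $C_\mu$ in \eqref{eq:B'}. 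The verification will then amount to showing that, slope by slope, the recursion strips the curve factor off $A_{+\infty}$ and evaluates $\wtl H_r^{(p)}$ at $t=1$, producing the degree-independent value $\Om_r=\wtl H_r^{(p)}(1)=(-1)^{pr}H_r^{(p)}(1)$ demanded by Conjecture \ref{conj:Om}.

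I would run the induction on $r$. At rank $1$ the products $C_{>\mu}\inv,C_{\ge-\mu}\inv$ contribute trivially and \eqref{eq:recursion conjecture} collapses to
$$\Om_1\frac{\cL^{\mu/2}-\cL^{-\mu/2}}{\cL-1}=[s^\mu]D_1-[s^{-\mu}]D_1,\qquad D_1=\frac{t}{(1-t)(1-t\cL)}\wtl H_1^{(p)}(t),$$
where $[s^\mu]$ denotes the coefficient in the expansion at $s=0$. Since $H_1^{(p)}(t)=P_X(t)$ one has $D_1=(-1)^pt^{1-g}Z_X(t)$, whence $[s^m]D_1=(-1)^p[S^{m+g-1}X]\cL^{-m/2}$; for $\mu$ large $[s^{-\mu}]D_1=0$, and using $Z_X(t)=\frac{P_X(t)}{(1-t)(1-t\cL)}$ together with the functional equation \eqref{eq:Z symmetry}, equivalently the palindromy $\cL^gP_X(\cL\inv)=P_X(1)$, the right-hand side collapses to $(-1)^pP_X(1)\frac{\cL^{\mu/2}-\cL^{-\mu/2}}{\cL-1}$ for every $\mu$. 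This yields $\Om_1=(-1)^pP_X(1)=(-1)^pH_1^{(p)}(1)$, independent of $\mu$ as required. For the inductive step, with the lower-rank $C_\eta$ already equal to their candidate values, the only new contribution comes from the rank-$r$ part of $\ub A_{+\infty}$, i.e.\ from $[\TT^r]\Exp(\sum_nD_n\TT^n)$; here the palindromy $\wtl H_n^{(p)}(1/t\cL)=\wtl H_n^{(p)}(t)$ of Lemma \ref{palyndr2}, combined with the invariance of $\tfrac{t}{(1-t)(1-t\cL)}$ under $t\mto1/t\cL$, plays exactly the role it did at rank $1$: it matches the slope-$\mu$ and slope-$(-\mu)$ sides, which are interchanged by $s\mto s\inv$, and forces the extracted $\Om_r$ to be the slope-independent constant $\wtl H_r^{(p)}(1)$.

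The main obstacle is this inductive extraction in arbitrary rank: one must track how the plethystic exponential and the twist operators $S_{\pm\hi}$ interact with the slope grading, so that the lower-rank corrections from $C_{>\mu}\inv$ and $C_{\ge-\mu}\inv$ assemble correctly and the curve factor cancels uniformly in $\mu$ (the computation already at rank $1$ requires care across the range $1\le\mu\le g-1$, where both $[s^\mu]D_1$ and $[s^{-\mu}]D_1$ are nonzero). The conceptual input that makes the cancellation work, and that secures the degree-independence of $\Om_r$, is the palindromy of $\wtl H_n^{(p)}$ (Lemma \ref{palyndr2}) together with the curve's functional equation \eqref{eq:Z symmetry}; the rest is careful but routine plethystic bookkeeping in the commutative ring $\lV\lser s\pser{\TT}$, in which $S_\la$ commutes with $\Exp$.
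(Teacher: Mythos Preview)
Your overall strategy --- uniqueness via Remark \ref{rmr:recursion}, then verification that the candidate values from Conjecture \ref{conj:Om} satisfy the recursion --- is exactly the paper's. Where you diverge is in the verification step. The paper does not extract coefficients rank by rank; instead it proves the stronger, slope-free identity of Theorem \ref{main2},
\[
A_{+\infty}C_{>\mu}\inv = (A_{+\infty}C_{\ge-\mu}\inv)(s\inv) \quad \text{in } \lV\pser{s^{\pm1},\TT},
\]
which immediately yields \eqref{eq:recursion2} on taking slope-$\mu$ parts. The argument is in two halves: first, the identity holds in the ring of $\TT$-rational functions $\lV(s)\pser\TT$, because both $A_{+\infty}$ and $C_{>\mu}=\Exp(\Om'_{>\mu}/(\cL-1))$ are invariant (resp.\ interchanged with $C_{\ge-\mu}$) under $s\mto s\inv$ there (Lemma \ref{A property} and the Proposition after Lemma \ref{lmm:Hprb>0 is palindromic}); second, the identity lifts to $\lV\pser{s^{\pm1},\TT}$ because $A_{+\infty}C_{>\mu}\inv$ in fact lies in $\lV[s^{\pm1}]\pser\TT$. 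The latter is the crucial point, and it is exactly where the polynomiality hypothesis on $H_n^{(p)}$ enters: one reduces to showing that
\[
\frac{\wtl H_n^{(p)}(t) - \wtl H_n^{(p)}(1)}{(1-t)(1-t\cL)}
\]
is a Laurent polynomial in $t$, which follows from $\Om_n=\wtl H_n^{(p)}(1)$ together with the palindromy $\wtl H_n^{(p)}(1/t\cL)=\wtl H_n^{(p)}(t)$ of Lemma \ref{palyndr2}.

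Your inductive route is not wrong in principle, and your rank-$1$ computation is correct. But the inductive step is where the real work lies, and it is not routine. At rank $r$ the right-hand side of \eqref{eq:recursion conjecture} involves not only $D_r$ but all products $[\TT^{k}]\ub A_{+\infty}\cdot[\TT^{r-k}]C_{>\mu}\inv$, and one must show that these mixed terms match their counterparts on the slope-$(-\mu)$ side for \emph{every} $\mu$; the slope truncation does not commute with $\Exp$, so ``careful but routine plethystic bookkeeping'' understates what is required. The paper's two-step device --- symmetry in $\lV(s)\pser\TT$, then Laurent-polynomiality via the divisibility displayed above --- is precisely what organizes this cancellation without any induction and makes transparent where the polynomiality assumption is spent. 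If you want to complete your approach, the cleanest route is to abandon the coefficient extraction and prove Theorem \ref{main2} directly.
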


This result was checked for rank $r\le3$ and small $g$ in \cite{chuang_motivic} by explicit calculations. It follows from Remark \ref{rmr:recursion} that the values $\Om_\ga$ are uniquely determined by the recursion formula and the series $A_{+\infty}$. Therefore to prove Theorem \ref{main1} we have to show that invariants given by Conjectures \ref{conj:A}, \ref{conj:Om} satisfy the recursion formula. We will prove a more general result

\begin{thr}
\label{main2}
For any $\mu\in\cR$ we have
\begin{equation}
A_{+\infty}C_{>\mu}\inv
=(A_{+\infty}C_{\ge-\mu}\inv)(s\inv)
\label{eq:Th}
\end{equation}
in $\lV\pser{s^{\pm1},\TT}$, where on the right side we first
embed $A_{+\infty}C_{\ge-\mu}\inv$ into 
$\lV\lser{s}\pser{\TT}$ and then invert $s$.
\end{thr}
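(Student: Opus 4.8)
The plan is to deduce \eqref{eq:Th} from two reflection symmetries under the involution $\iota\colon s\mto s\inv$ (equivalently $t\mto 1/t\cL$, which fixes $\cL$ and commutes with every Adams operation $\psi_k$, hence with $\Exp$). The first symmetry is $\iota(A_{+\infty})=A_{+\infty}$. To see it I would rewrite $A_{+\infty}$ via Remark \ref{symmetric H} as $A_{+\infty}=\Exp\bigl(\sum_{n\ge1}\frac{t}{(1-t)(1-t\cL)}\wtl H_n^{(p)}(t)\TT^n\bigr)$ and check that the exponent is $\iota$-fixed term by term: the prefactor $\frac{t}{(1-t)(1-t\cL)}$ equals $\frac{s}{(s-y)(sy-1)}$, which is manifestly invariant under $s\mto s\inv$, while $\wtl H_n^{(p)}$ is invariant by the palindromy $\wtl H_n^{(p)}(1/t\cL)=\wtl H_n^{(p)}(t)$ of Lemma \ref{palyndr2}. (Equivalently and more directly, one checks $\iota(\lH_\la^{(p)})=\lH_{\la'}^{(p)}$ from the functional equation \eqref{eq:Z symmetry} for $Z_X$ together with the arm/leg interchange under conjugation $\la\mto\la'$; since $\n{\la'}=\n\la$, summing over partitions gives $\iota(A_{+\infty})=A_{+\infty}$.) The second symmetry, $\iota(C_\mu)=C_{-\mu}\inv$, is the identity recorded in Remark \ref{rmr:recursion}; it follows from $\iota(\Om_\mu)=\Om_{-\mu}$ (the invariants $\Om_r$ being degree-independent elements of $\lV$, hence $\iota$-fixed), from $\iota S_{\hi}\iota\inv=S_{-\hi}$, and from the commutation of $\iota$ with $\Exp$.

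With both symmetries in hand I would combine them in the commutative ring $\lV\lser s\pser\TT$, where all the $C_\eta$ commute. Applying $\iota$ to the left-hand side of \eqref{eq:Th} and distributing over the (slope-ordered) product gives $\iota(A_{+\infty}C_{>\mu}\inv)=\iota(A_{+\infty})\prod_{\eta>\mu}\iota(C_\eta)\inv=A_{+\infty}\prod_{\eta>\mu}C_{-\eta}=A_{+\infty}C_{<-\mu}$, after the reindexing $\eta\mto-\eta$. It then remains to identify $C_{<-\mu}$ with $C_{\ge-\mu}\inv$. This holds because the two complementary slope ranges assemble to $C_{<-\mu}C_{\ge-\mu}=\Exp\bigl((S_{\hi}-S_{-\hi})\tfrac{\Om}{\cL-1}\bigr)$ with $\Om=\Om_{<-\mu}+\Om_{\ge-\mu}$ collecting every slope; for each rank the inner geometric sum is $\Om_r\sum_{\hi\in\cZ}s^\hi=0$ as a rational function, so $\Om=0$ and $C_{<-\mu}=C_{\ge-\mu}\inv$. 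Hence $\iota(A_{+\infty}C_{>\mu}\inv)=A_{+\infty}C_{\ge-\mu}\inv$, and applying $\iota$ once more (using $\iota^2=\id$) yields exactly \eqref{eq:Th}.

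The manipulations above are, strictly speaking, identities of rational functions in $s$ (coefficientwise in $\TT$), and the main obstacle is to legitimize them as an identity of the prescribed Laurent expansions. The two sides of \eqref{eq:Th} are a priori the expansions of one rational object in opposite directions — the left-hand side in $\lV\lser s$, the right-hand side in $\lV\lser{s\inv}$ after inverting $s$ — and the identities $\sum_{\hi\in\cZ}s^\hi=0$ and $C_{<-\mu}=C_{\ge-\mu}\inv$ mix the two directions. The substantive point is therefore that each $\TT^n$-coefficient of $A_{+\infty}C_{>\mu}\inv$ actually lies in the common domain of both expansions, i.e. is a Laurent polynomial in $s$ (the motivic shadow of Poincar\'e duality on the framed moduli spaces). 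Geometrically this is a pole cancellation: the poles of $A_{+\infty}$ sit at $s=\ze\cL^{\pm\oh}$ (located by the $Z_X$-factors), and the leading logarithmic residue of $A_{+\infty}$ at each such pole is governed by $\frac{\Om_r}{\cL-1}$ — precisely the datum built into $C_\mu=\Exp\bigl((S_{\hi}-S_{-\hi})\tfrac{\Om_\mu}{\cL-1}\bigr)$ through $\Om_r=\wtl H_r^{(p)}(1)$. I expect the hard part to be verifying this cancellation at all pole orders and tracking the slope cutoffs so that $C_{>\mu}$ (and not $C_{\ge-\mu}$) removes exactly the slope-$>\mu$ principal parts; once that is done, the symmetries of the first two paragraphs upgrade the resulting Laurent-polynomial identity to the reflection statement \eqref{eq:Th}.
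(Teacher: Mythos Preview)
Your overall strategy is exactly the paper's: first establish the identity in $\lV(s)\pser\TT$ (rational in $s$, coefficientwise in $\TT$), then show that $A_{+\infty}C_{>\mu}\inv$ actually lands in $\lV[s^{\pm1}]\pser\TT$, so that the two Laurent expansions agree. Your first two paragraphs carry out the rational-function step correctly. The paper packages it slightly differently---it proves $\Om'_{>\mu}(s)=\Om'_{\ge-\mu}(s\inv)$ directly in $\lV(s)\pser\TT$ rather than passing through $C_{<-\mu}C_{\ge-\mu}=1$---but the arguments are equivalent. (One minor slip: you say you work in $\lV\lser s\pser\TT$, but $\iota$ does not act on that ring; you need $\lV(s)\pser\TT$, as you tacitly acknowledge in paragraph three.)

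The gap is your third paragraph. You correctly isolate the Laurent-polynomial claim as the remaining issue, but you do not execute it, and you overestimate its difficulty: no pole-order bookkeeping or slope-by-slope residue tracking is needed. The paper's argument is a two-line divisibility. First reduce to $\mu=0$, since $\Om'_{>\mu}-\Om'_{>0}$ already lies in $\lV[s^{\pm1}]\pser\TT$ (for each rank it is a finite sum). Then write both factors in exponential form: by Remark~\ref{symmetric H}, $A_{+\infty}=\Exp\bigl(\sum_n\frac{t}{(1-t)(1-t\cL)}\wtl H_n(t)\TT^n\bigr)$, and by the computation in Lemma~\ref{lmm:Hprb>0 is palindromic}, $\frac{\Om'_{>0}}{\cL-1}=\sum_n\frac{t}{(1-t)(1-t\cL)}\Om_n\TT^n$. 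Hence
\[
A_{+\infty}C_{>0}\inv
=\Exp\Bigl(\sum_{n\ge1}\frac{t\bigl(\wtl H_n(t)-\Om_n\bigr)}{(1-t)(1-t\cL)}\TT^n\Bigr),
\]
and it suffices that each $\dfrac{\wtl H_n(t)-\Om_n}{(1-t)(1-t\cL)}$ be a Laurent polynomial in $t$. Here the standing hypothesis that $H_n^{(p)}(t)$ is a polynomial (which you never invoke) is essential: it makes $\wtl H_n(t)\in\lV[t^{\pm1}]$. Since $\Om_n=\wtl H_n(1)$, the numerator is divisible by $1-t$; by the palindromy $\wtl H_n(1/t\cL)=\wtl H_n(t)$ of Lemma~\ref{palyndr2}, it is also divisible by $1-t\cL$. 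That is the entire pole cancellation.
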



\section{Properties of the ADHM invariants}
In this section we will prove Theorem \ref{main2}.
We will see that $C_{>\mu}$ and $C_{\ge-\mu}$
are contained in $\lV(s)\pser \TT$ (we say that they are $\TT$-rational in this case)
and satisfy
$$C_{>\mu}(s)=C_{\ge-\mu}(s\inv)$$
in $\lV(s)\pser \TT$.
Also $A_{+\infty}$ is $\TT$-rational and we have
$A_{+\infty}(s)=A_{+\infty}(s\inv)$ in $\lV(s)\pser \TT$.
These facts imply that an analog of equation \eqref{eq:Th} holds in $\lV(s)\pser \TT$.
To prove Theorem \ref{main2} we will show that $A_{+\infty}C_{>\mu}\inv$ is contained in $\lV[s^{\pm1}]\pser \TT$.

We use the same notation as in the previous sections. We will denote $\lH^{(p)}_\la$ (resp.\ $H^{(p)}_n$) just by $\lH_\la$ (resp.\ $H_n$).

\subsection{Properties of Higgs sheaf invariants}
Recall from equation \eqref{eq:B'} that
$$C_\mu=\Exp\left((S_{\hi}-S_{-\hi})\frac{\Om_\mu}{\cL-1}\right).$$
Define 
\begin{equation}
\Om'_\mu=(S_{\hi}-S_{-\hi})\Om_\mu
\label{eq:Om'mu}
\end{equation}

and more generally
\begin{multline}
\Om'=(S_{\hi}-S_{-\hi})\sum_\ga\Om_\ga s^\hi T^r
=\sum_{n\ge1}\Om_n T^n\sum_{k\in\cZ}((sy)^k-(s/y)^k)\\
=\Om(T)\sum_{k\in\cZ}((sy)^k-(s/y)^k)\in\lV\pser{s^{\pm1},T},
\end{multline}
where $\Om(T):=\sum_{n\ge1}\Om_n T^n$.

\begin{lmm}
\label{mu and -mu}
We have $\OO_{>\mu}(s)=-\OO_{<-\mu}(s\inv)$ in $\lV\pser{s^{\pm1},\TT}$.
\end{lmm}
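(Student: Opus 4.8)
The plan is to read off the $s$- and $\TT$-exponents of $\OO=\Om'$ from the explicit expansion already derived and then match the two sides monomial by monomial under the slope grading. First I would rewrite the displayed formula for $\Om'$ so that the powers of $s$ are isolated: since $(sy)^k=y^ks^k$ and $(s/y)^k=y^{-k}s^k$, the formula becomes
$$\OO=\sum_{n\ge1}\Om_n \TT^n\sum_{k\in\cZ}(y^k-y^{-k})\,s^k.$$
The crucial input, already built into this expansion, is that $\Om_\ga=\Om_n$ is independent of the degree $d$, so that for each fixed $n$ the inner sum runs over all $k\in\cZ$ with a single $n$-dependent coefficient. The monomial $s^k\TT^n$ is the one attached to $\ga=(n,d)$ with $\hi=k$, hence has slope $\mu(\ga)=\hi/r=k/n$; therefore the truncations act by restricting the inner range,
$$\OO_{>\mu}=\sum_{n\ge1}\Om_n \TT^n\sum_{k>\mu n}(y^k-y^{-k})\,s^k,\qquad
\OO_{<-\mu}=\sum_{n\ge1}\Om_n \TT^n\sum_{k<-\mu n}(y^k-y^{-k})\,s^k.$$

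The key step is then to apply $s\mto s\inv$ to $\OO_{<-\mu}$ and reindex the inner sum by $k\mto-k$. Three changes occur simultaneously and must be tracked together: the constraint $k<-\mu n$ turns into $k>\mu n$, the antisymmetric factor $y^k-y^{-k}$ turns into $y^{-k}-y^k=-(y^k-y^{-k})$, and the monomial $s^{-k}$ produced by $s\mto s\inv$ turns back into $s^k$. Combining them yields
$$\OO_{<-\mu}(s\inv)=\sum_{n\ge1}\Om_n \TT^n\sum_{k>\mu n}\bigl(-(y^k-y^{-k})\bigr)s^k=-\OO_{>\mu}(s),$$
which is exactly the asserted identity.

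There is essentially no obstacle here: the lemma is the statement that the factor $y^{\hi}-y^{-\hi}$ introduced by $S_{\hi}-S_{-\hi}$ is antisymmetric under $\hi\mto-\hi$, while the involution $s\mto s\inv$ reverses every slope $k/n\mto-k/n$ and hence interchanges the regions $\{k>\mu n\}$ and $\{k<-\mu n\}$. The only point meriting care is to confirm that passing from $\Om$ to $\OO=(S_{\hi}-S_{-\hi})\Om$ does not alter the monomials $s^k\TT^n$ themselves---$S_\la$ only rescales coefficients by powers of $\cL$, by its definition $s^k\TT^r\mto\cL^{\oh\la(r,k)}s^k\TT^r$---so that $\OO_{>\mu}$ and $\OO_{<-\mu}$ are taken with respect to the same slope grading inherited from $\Om$.
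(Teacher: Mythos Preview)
Your proof is correct and follows essentially the same approach as the paper's own proof: both identify the coefficient of $s^k\TT^n$ in $\OO$ as $(y^k-y^{-k})\Om_n$, observe that $s\mapsto s^{-1}$ (equivalently $k\mapsto -k$) swaps the slope regions $\{k/n>\mu\}$ and $\{k/n<-\mu\}$ while flipping the sign of the antisymmetric factor $y^k-y^{-k}$. Your write-up is somewhat more explicit about the reindexing, but the argument is the same.
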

\begin{proof}
If $s^k\TT^r$ appears in $\OO_{>\mu}$ then $k/r>\mu$. Therefore $-k/r<-\mu$ and $s^{-k}\TT^r$ appears in $\OO_{<-\mu}$.
The coefficient of $s^k\TT^r$ in $\OO_{>\mu}$ equals
$$(y^{k}-y^{-k})\Om_r,$$
whereas the coefficient of $s^{-k}\TT^r$ in $\OO_{<-\mu}$ equals
to its opposite
$$(y^{-k}-y^{k})\Om_r.$$
\end{proof}

\begin{rmr}
\label{rmr:mu and -mu1}
We also have
$\OO_{\ge\mu}(s)=-\OO_{\le-\mu}(s\inv)$ and
$\OO_{\mu}(s)=-\OO_{-\mu}(s\inv)$.
\end{rmr}

\begin{dfn}
We say that the series $f=\sum_{k\ge0}f_k\TT^k\in\lV\pser{s^{\pm1},\TT}$
is $\TT$-Laurent if $f_k\in\lV\lser s$, $k\ge0$.
We say that $f$ is $\TT$-rational if every $f_k\in\lV\lser s$, $k\ge0$, is rational (this means that there exists $g\in\lV[s]$ invertible in $\lV\pser s$ such that $f_kg\in\lV[s]$, we write $f_k\in\lV(s)$ in this case).
\end{dfn}

\begin{lmm}
\label{lmm:Hprb>0 is palindromic}
The series $\OO_{\ge0}=\OO_{>0}$ is $\TT$-rational and satisfies
$\OO_{>0}(s)=\OO_{\ge0}(s\inv)$ in $\lV(s)\pser \TT$.
\end{lmm}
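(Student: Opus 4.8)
The plan is to reduce everything to the closed form for $\OO$ recorded immediately above, namely $\OO=\Om(\TT)\sum_{k\in\cZ}\big((sy)^k-(s/y)^k\big)$ with $\Om(\TT)=\sum_{n\ge1}\Om_n\TT^n$, and then to sum the two geometric series in $s$. Here a monomial $s^k\TT^r$ occurring in $\OO$ has $r\ge1$, so its slope is $k/r$, which is $0$ exactly when $k=0$. Since the $k=0$ contribution to $\OO$ is $\Om(\TT)\big((sy)^0-(s/y)^0\big)=0$, the series $\OO$ has no slope-$0$ part, and this immediately gives the first claim $\OO_{\ge0}=\OO_{>0}$.

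Next I would extract the positive part. As $k/r>0$ is equivalent to $k\ge1$, summing the two geometric series gives
$$\OO_{>0}=\Om(\TT)\sum_{k\ge1}\big((sy)^k-(s/y)^k\big)=\Om(\TT)\left(\frac{sy}{1-sy}-\frac{s/y}{1-s/y}\right)=\Om(\TT)\,\frac{s(y-y\inv)}{(1-sy)(1-s/y)}.$$
The denominator $(1-sy)(1-s/y)=1-s(y+y\inv)+s^2$ lies in $\lV[s]$ and is invertible in $\lV\pser s$, so each coefficient $\Om_r\cdot\frac{s(y-y\inv)}{(1-sy)(1-s/y)}$ of $\TT^r$ lies in $\lV(s)$; this is the asserted $\TT$-rationality.

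For the palindromy I would substitute $s\mto s\inv$ into this rational function and clear $s^2$ from numerator and denominator: the denominator becomes $(s-y)(s-y\inv)=1-s(y+y\inv)+s^2=(1-sy)(1-s/y)$ and the numerator is restored to $s(y-y\inv)$, so the rational function is fixed by $s\mto s\inv$. Together with $\OO_{\ge0}=\OO_{>0}$ this yields $\OO_{>0}(s)=\OO_{>0}(s\inv)=\OO_{\ge0}(s\inv)$ in $\lV(s)\pser\TT$. I do not expect a real obstacle; the only delicate point is that this last identity fails at the level of power series, since $\OO_{>0}(s)$ and $\OO_{>0}(s\inv)$ have disjoint supports in the powers of $s$ and are merely the two Laurent expansions (at $0$ and at $\infty$) of one and the same element of $\lV(s)$ --- which is exactly why the statement is phrased in $\lV(s)\pser\TT$.
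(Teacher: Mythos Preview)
Your argument is correct and is essentially the same as the paper's: both express $\OO_{>0}$ as $\Om(\TT)$ times a closed rational function by summing the geometric series, observe that this form gives $\TT$-rationality, and then check the palindromy by direct substitution. The only cosmetic difference is that the paper rewrites everything via $t=s/y$ and $\cL=y^2$, obtaining $\OO_{>0}=\Om(\TT)\,\dfrac{t(\cL-1)}{(1-t)(1-t\cL)}$ and checking invariance under $t\mapsto 1/(t\cL)$, whereas you stay in the $s,y$ variables; your additional explicit remark that the $k=0$ term vanishes (hence $\OO_{\ge0}=\OO_{>0}$) and your comment about the two Laurent expansions are welcome clarifications.
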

\begin{proof}
We have (we use $s=ty$, as usual)
\begin{multline*}
\OO_{>0}
=\Om(T)\sum_{k\ge0}((sy)^k-(s/y)^k)\\
=\Om(T)\left(\frac1{1-t\cL}-\frac1{1-t}\right)
=\Om(T)\frac{t(\cL-1)}{(1-t)(1-t\cL)}.
\end{multline*}
But it is clear that
$$\frac{t}{(1-t)(1-t\cL)}$$
is invariant under the change of variables $t\mto1/t\cL$ that corresponds to the change of variables $s\mto s\inv$.
\end{proof}


We can generalize the last lemma to an arbitrary slope.

\begin{prp}
For any $\mu\in\cR$, the series $\OO_{\ge\mu}$ and $\OO_{>\mu}$
are $\TT$-rational and satisfy 
$\OO_{>\mu}(s)=\OO_{\ge-\mu}(s\inv)$
in $\lV(s)\pser \TT$.
\end{prp}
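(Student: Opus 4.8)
The plan is to run the computation of Lemma~\ref{lmm:Hprb>0 is palindromic} essentially verbatim, replacing the one-sided geometric sums $\sum_{k\ge0}$ appearing there by truncated sums whose lower endpoint depends on the power of $\TT$. Starting from $\OO=\Om(T)\sum_{k\in\cZ}((sy)^k-(s/y)^k)$ with $\Om(T)=\sum_{n\ge1}\Om_nT^n$, I would extract the coefficient of $\TT^n$. A monomial $(sy)^k\TT^n$ or $(s/y)^k\TT^n$ has slope $k/n$, so keeping only slopes $>\mu$ leaves exactly the indices $k>\mu n$, i.e.\ $k\ge m_n:=\lfloor\mu n\rfloor+1$. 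Thus
$$\OO_{>\mu}=\sum_{n\ge1}\Om_nT^n\sum_{k\ge m_n}\big((sy)^k-(s/y)^k\big),$$
and $\OO_{\ge\mu}$ is the same expression with the threshold $\lceil\mu n\rceil$.

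For rationality I would fix $n$ and sum the two geometric series. Since $sy=t\cL$ and $s/y=t$ are each $s$ times a unit of $\lV$, the series $\tfrac1{1-sy}$ and $\tfrac1{1-s/y}$ lie in $\lV\pser s$, and the $\TT^n$-coefficient equals
$$\Om_n\left(\frac{(sy)^{m_n}}{1-sy}-\frac{(s/y)^{m_n}}{1-s/y}\right)\in\lV(s),$$
a Laurent series in $s$ with only finitely many negative powers (as $m_n$ is a fixed integer). Since every $\TT^n$-coefficient is rational, $\OO_{>\mu}$ is $\TT$-rational; the identical computation with threshold $\lceil\mu n\rceil$ shows $\OO_{\ge\mu}$ is $\TT$-rational as well.

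For the palindromic identity I would compare $\TT^n$-coefficients. For $\OO_{\ge-\mu}$ the condition $k\ge-\mu n$ gives the lower endpoint $\lceil-\mu n\rceil=-\lfloor\mu n\rfloor=1-m_n$, so its $\TT^n$-coefficient is $\Om_n\big(\tfrac{(sy)^{1-m_n}}{1-sy}-\tfrac{(s/y)^{1-m_n}}{1-s/y}\big)$. Now I apply $s\mto s\inv$, which by the remark in Section~\ref{sec:Asymp} is the substitution $t\mto1/t\cL$ and therefore sends $sy\mto(s/y)\inv$ and $s/y\mto(sy)\inv$; combined with the elementary reversal $\tfrac{w^{m-1}}{1-w\inv}=-\tfrac{w^m}{1-w}$ (valid in $\lV(s)$, applied with $w=s/y$ and $w=sy$) this turns the coefficient into $\Om_n\big(\tfrac{(sy)^{m_n}}{1-sy}-\tfrac{(s/y)^{m_n}}{1-s/y}\big)$, exactly the $\TT^n$-coefficient of $\OO_{>\mu}$ computed above. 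Hence $\OO_{>\mu}(s)=\OO_{\ge-\mu}(s\inv)$ in $\lV(s)\pser\TT$.

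The geometric summations and the sign bookkeeping in the reversal are routine; the one point that must be handled with care — and the only real content beyond Lemma~\ref{lmm:Hprb>0 is palindromic} — is the arithmetic of the endpoints, namely the identity $\lfloor\mu n\rfloor+\lceil-\mu n\rceil=0$ for every $n$, including the boundary case $\mu n\in\cZ$. This is precisely what reconciles the \emph{strict} inequality defining $\OO_{>\mu}$ with the \emph{non-strict} one defining $\OO_{\ge-\mu}$ after the reflection $s\mto s\inv$, and it explains why the asymmetric pairing $(>\mu,\ \ge-\mu)$ rather than $(>\mu,\ >-\mu)$ is the correct one in the statement.
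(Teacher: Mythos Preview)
Your argument is correct. It differs from the paper's proof in that you work directly: you compute the $\TT^n$-coefficient of $\OO_{>\mu}$ as an explicit rational function of $s$ by summing the truncated geometric series, and then verify the palindromic identity by a bare-hands substitution $s\mapsto s^{-1}$, using the endpoint arithmetic $\lceil-\mu n\rceil=1-m_n$. The paper instead argues structurally: it first establishes the base case $\mu=0$ (Lemma~\ref{lmm:Hprb>0 is palindromic}), then observes that the $\TT^r$-coefficients of $\OO_{\ge\mu}$ and $\OO_{\ge0}$ differ by a Laurent polynomial in $s$ (hence rationality transfers), and finally deduces the identity for general $\mu$ from the antisymmetry relation $\OO_{>\mu}(s)=-\OO_{<-\mu}(s^{-1})$ of Lemma~\ref{mu and -mu}. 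Your route is more self-contained (you never need Lemma~\ref{mu and -mu} separately --- its content is absorbed into your endpoint bookkeeping), while the paper's route isolates the reflection symmetry as a standalone fact that is reused elsewhere. Both approaches ultimately reduce to the same geometric-series computation and the invariance of $\tfrac{t}{(1-t)(1-t\cL)}$ under $t\mapsto1/t\cL$.
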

\begin{proof}
We have seen
that $\OO_{\ge0}$ is $\TT$-rational.
The coefficients by $\TT^r$
in $\OO_{\ge0}$ and $\OO_{\ge\mu}$ differ by an element
from $\lV[s^{\pm1}]$. This implies that $\OO_{\ge\mu}$
is also $\TT$-rational. The same applies to $\OO_{>\mu}$.
We have in $\lV\pser{s^{\pm},\TT}$
\begin{equation}
\OO_{\ge-\mu}(s\inv)-\OO_{\ge0}(s\inv)
=\OO_{<0}(s\inv)-\OO_{<-\mu}(s\inv)
=\OO_{>\mu}(s)-\OO_{>0}(s),
\label{eq:diff1}
\end{equation}
where the last equality follows from Lemma \ref{mu and -mu}.
The difference $\OO_{\ge-\mu}(s)-\OO_{\ge0}(s)$ is contained in $\lV[s^\pm]\pser \TT$,
so we can invert $s$ in both expressions either in $\lV(s)\pser \TT$ or in $\lV\pser{s^\pm,\TT}$ obtaining the same difference. Therefore equation \eqref{eq:diff1} implies 
$$\OO_{\ge-\mu}(s\inv)-\OO_{\ge0}(s\inv)
=\OO_{>\mu}(s)-\OO_{>0}(s).$$
in $\lV(s)\pser \TT$.
By Lemma \ref{lmm:Hprb>0 is palindromic} we have
$\OO_{\ge0}(s\inv)=\OO_{>0}(s)$ in $\lV(s)\pser \TT$.
This implies $\OO_{\ge-\mu}(s\inv)=\OO_{>\mu}(s)$.
\end{proof}

\subsection{Properties of asymptotic ADHM invariants}

\begin{lmm}
\label{A property}
We have $A_{+\infty}(s,\TT)=A_{+\infty}(s\inv,\TT)$ in $\lV(s)\pser \TT$.
\end{lmm}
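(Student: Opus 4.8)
The plan is to prove the sharper, termwise statement that for every partition $\la$ one has $\lH_\la^{(p)}(1/t\cL)=\lH_{\la'}^{(p)}(t)$, where the substitution $t\mto 1/t\cL$ is exactly the one corresponding to $s\mto s\inv$. Since conjugation $\la\mto\la'$ is an involution on the set of partitions that preserves the size $\n\la=\n{\la'}$ (so the monomial $\TT^{\n\la}$ is unchanged), summing this identity over all $\la$ then gives
$$A_{+\infty}(s\inv)=\sum_\la\lH_\la^{(p)}(1/t\cL)\TT^{\n\la}=\sum_\la\lH_{\la'}^{(p)}(t)\TT^{\n{\la'}}=A_{+\infty}(s).$$
First I would record that each $\lH_\la^{(p)}(t)$ is a rational function of $t$, hence of $s$: by \eqref{eq:P_X} every factor $Z_X(t^h\cL^a)$ is rational, and since there are only finitely many partitions of each size, the coefficient of every $\TT^n$ in $A_{+\infty}$ is a finite sum of rational functions and so lies in $\lV(s)$. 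This both shows that $A_{+\infty}$ is $\TT$-rational and makes the substitution $s\mto s\inv$ legitimate inside the field $\lV(s)$.

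The computational heart is an application of the functional equation \eqref{eq:Z symmetry} box by box. Under $t\mto 1/t\cL$ the argument $t^h\cL^a$ becomes $t^{-h}\cL^{a-h}$. Taking $T=t^h\cL^l$ and using $h=a+l+1$ (so $1/T\cL=t^{-h}\cL^{-l-1}=t^{-h}\cL^{a-h}$), the equation $Z_X(1/T\cL)=(T^2\cL)^{1-g}Z_X(T)$ yields
$$Z_X\big((1/t\cL)^h\cL^a\big)=(t^{2h}\cL^{2l+1})^{1-g}Z_X(t^h\cL^l).$$
I would then invoke the conjugation correspondence: the box $(i,j)\in d(\la)$ matches $(j,i)\in d(\la')$, under which $a(x)$ and $l(x)$ are interchanged while $h(x)$ is preserved. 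Hence $\prod_{x\in d(\la)}Z_X(t^h\cL^l)=\prod_{x\in d(\la')}Z_X(t^h\cL^a)$, which is precisely the zeta-product appearing in $\lH_{\la'}^{(p)}(t)$.

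The remaining work is bookkeeping of the scalar prefactors. Multiplying the functional-equation prefactors over all boxes and using $\sum_x l=n(\la)$, $\sum_x a=n(\la')$, $\sum_x h=n(\la)+n(\la')+\n\la$, and $\sum_x(2l+1)=2n(\la)+\n\la$, the factor $\prod_x(t^{2h}\cL^{2l+1})^{1-g}$ becomes $t^{2(1-g)(n(\la)+n(\la')+\n\la)}\cL^{(1-g)(2n(\la)+\n\la)}$. Combining this with the explicit prefactor of $\lH_\la^{(p)}$ evaluated at $1/t\cL$ — the sign $(-1)^{p\n\la}$, the power $(1/t\cL)^{(1-g)(2n(\la)+\n\la)+p(n(\la')-n(\la))}$, and the twist $\cL^{pn(\la')}$ — I would check that the total power of $t$ collapses to $(1-g)(2n(\la')+\n{\la'})+p(n(\la)-n(\la'))$ and the total power of $\cL$ to $pn(\la)$, which are exactly the exponents in $\lH_{\la'}^{(p)}(t)$ after using $n(\la'')=n(\la)$ and $\n\la=\n{\la'}$; the sign is unchanged for the same reason. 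This closes the termwise identity.

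The main obstacle is precisely this exponent accounting: one must verify that the $(1-g)$- and $p$-weighted contributions coming from the functional-equation prefactors recombine with the $t$- and $\cL$-powers already built into $\lH_\la^{(p)}$ so as to reproduce $\lH_{\la'}^{(p)}$ \emph{exactly}, leaving no residual monomial in $t$ or $\cL$. The conceptual input is light — only the functional equation \eqref{eq:Z symmetry} together with the arm/leg/hook symmetry of conjugation — but the identity comes out cleanly only if the sign $(-1)^{p\n\la}$, the twist $\cL^{pn(\la')}$, and the mixed term $t^{p(n(\la')-n(\la))}$ in \eqref{eq:Hp la} are all tracked correctly; this is the step where the argument either closes or exposes a normalization subtlety.
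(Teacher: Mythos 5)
Your proof is correct and follows essentially the same route as the paper: reduce to the termwise identity $\lH^{(p)}_\la(1/t\cL)=\lH^{(p)}_{\la'}(t)$, apply the functional equation \eqref{eq:Z symmetry} box by box with $T=t^h\cL^l$, and use the arm/leg interchange under conjugation. The only cosmetic difference is that you track the $p$-dependent prefactors via the global exponents $n(\la),n(\la')$ rather than via the paper's per-box factor $f_\la(t)=\prod_x(-t^{a-l}\cL^a)^p$, and your bookkeeping checks out.
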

\begin{proof}
Recall that
$$A_{+\infty}(s,T)=\sum_\la\lH_\la(t)\TT^{\n\la}.$$
The change of variable $s\mto s\inv$ corresponds to the change of variables $t\mto 1/t\cL$.
Therefore it is enough to show that
$$\lH_{\la}(1/t\cL)=\lH_{\la'}(t).$$
According to equation \eqref{eq:Hp la} we have
$$\lH_\la(t)
=\prod_{x\in d(\la)}(-t^{a-l}\cL^a)^p\prod_{x\in d(\la)}t^{(1-g)(2l+1)}Z_X(t^h\cL^{a}).
$$
Denote the first product by $f_\la(t)$. Then one can easily see that $f_\la(1/t\cL)=f_{\la'}(t)$. Therefore, we can assume that $p=0$. Applying
equation \eqref{eq:Z symmetry}
$$Z_X(1/t\cL)=(t^2\cL)^{1-g}Z_X(t)$$
we obtain
\begin{multline*}
$$\lH_\la(1/t\cL)
=\prod_{x\in d(\la)}(t\cL)^{(g-1)(2l+1)}Z_X(t^{-h}\cL^{-l-1})\\
=\prod_{x\in d(\la)}(t\cL)^{(g-1)(2l+1)}(t^{2h}\cL^{2l+1})^{1-g}Z_X(t^{h}\cL^{l})
=\prod_{x\in d(\la)}t^{(1-g)(2a+1)}Z_X(t^{h}\cL^{l})
=\lH_{\la'}(t).
\end{multline*}
\end{proof}

Recall from Remark \ref{symmetric H} that we have defined
$$\wtl H_n(t)=(-1)^{pn}t^{-\dim\lM/2}H_n(t),$$
where $\lM=\lM(L,n,d)$ is the moduli space of twisted Higgs bundles. It follows from equation \eqref{eq:ubH} that
$$\Om_n=(-1)^{pn}H_n(1)=\wtl H_n(1).$$

\begin{lmm}
\label{palyndr2}
We have $\wtl H_n(t)=\wtl H_n(1/t\cL)$.
\end{lmm}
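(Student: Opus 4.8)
The plan is to read off the palindromy of $\wtl H_n$ from the already-established palindromy of the full generating series $A_{+\infty}$ (Lemma \ref{A property}) by inverting the plethystic exponential. Recall from Remark \ref{symmetric H} that
$$A_{+\infty}=\sum_\la\lH_\la(t)\TT^{\n\la}=\Exp\left(\sum_{n\ge1}\frac{t}{(1-t)(1-t\cL)}\wtl H_n(t)\TT^n\right).$$
Since $A_{+\infty}$ has constant term $1$, I can apply $\Log$ and obtain
$$\Log A_{+\infty}=\sum_{n\ge1}\frac{t}{(1-t)(1-t\cL)}\wtl H_n(t)\TT^n.$$
The whole statement then amounts to showing that this right-hand side is invariant under the substitution $\sigma\colon t\mto 1/t\cL$, which is exactly the change of variables $s\mto s\inv$.

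First I would record the two invariances already at my disposal. By Lemma \ref{A property}, $A_{+\infty}$ itself is $\sigma$-invariant; and by the computation in Lemma \ref{lmm:Hprb>0 is palindromic}, the scalar prefactor $t/\big((1-t)(1-t\cL)\big)$ is fixed by $\sigma$. The one genuinely structural point to verify is that $\sigma$ is a homomorphism of \la-rings, i.e.\ that it commutes with the Adams operations $\psi_k$, and hence with $\Psi\inv$, $\log$, and therefore with $\Log$. This is a direct check: on a monomial $a\,t^j$ with $a\in\lV$ one computes that both $\psi_k\sigma$ and $\sigma\psi_k$ send it to $\psi_k(a)\cL^{-kj}t^{-kj}$, using $\psi_k(\cL^\oh)=\cL^{k/2}$.

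Granting this, the argument closes quickly. Since $\sigma$ commutes with $\Log$, Lemma \ref{A property} gives $\sigma(\Log A_{+\infty})=\Log(\sigma A_{+\infty})=\Log A_{+\infty}$. Writing out $\sigma(\Log A_{+\infty})$ and using the $\sigma$-invariance of the prefactor, the identity $\sigma(\Log A_{+\infty})=\Log A_{+\infty}$ reads
$$\sum_{n\ge1}\frac{t}{(1-t)(1-t\cL)}\wtl H_n(1/t\cL)\TT^n=\sum_{n\ge1}\frac{t}{(1-t)(1-t\cL)}\wtl H_n(t)\TT^n.$$
Comparing coefficients of $\TT^n$ and cancelling the nonzero factor $t/\big((1-t)(1-t\cL)\big)$ yields $\wtl H_n(t)=\wtl H_n(1/t\cL)$, as claimed. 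I expect the main (and essentially the only) obstacle to be the bookkeeping that $\sigma$ is a \la-ring automorphism commuting with $\Log$; once that is in place, everything else is formal coefficient extraction resting on the two prior lemmas.
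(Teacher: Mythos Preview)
Your proposal is correct and follows essentially the same approach as the paper: both invoke the invariance of $A_{+\infty}$ from Lemma \ref{A property} together with the invariance of the prefactor $t/((1-t)(1-t\cL))$ to deduce the invariance of each $\wtl H_n$. You are simply more explicit than the paper about applying $\Log$ and checking that $\sigma$ is a \la-ring automorphism, which the paper leaves implicit.
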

\begin{proof}
We have seen in Remark \ref{symmetric H} that
$$\sum_{\la}\lH_\la(t)T^{\n\la}
=\Exp\left(\sum_{n\ge1}\frac{t}{(1-t)(1-t\cL)}\wtl H_n(t)T^n\right).$$
By Lemma \ref{A property} the left hand side of the above equation is invariant under the change of variables $t\mto1/t\cL$. On the other hand the expression
$$\frac{t}{(1-t)(1-t\cL)}$$
is also invariant under this change of variables.
\end{proof}

\begin{proof}[Proof of Theorem \ref{main2}]
It follows from equations \eqref{eq:B'} and \eqref{eq:Om'mu} that
$$C_{>\mu}=\Exp\left(\frac{\Om'_{>\mu}}{\cL-1}\right).$$
Therefore we have to prove that
$$A_{+\infty}/\Exp\left(\frac{\Om'_{>\mu}}{\cL-1}\right)
=\left(A_{+\infty}/\Exp\left(\frac{\Om'_{\ge-\mu}}{\cL-1}\right)\right)(s\inv)$$
in $\lV\pser{s^{\pm1},\TT}$. It follows from the above discussion that the left hand side and the right hand side are equal as $\TT$-rational functions. The theorem will be proved if we will show that these rational functions are actually contained in the ring $\lV[s^{\pm1}]\pser \TT$.
We have seen that
$$A_{+\infty}=
\sum_{\la}\lH_\la(t)T^{\n\la}
=\Exp\left(\sum_{n\ge1}\frac{t}{(1-t)(1-t\cL)}\wtl H_n(t)T^n\right).$$
Therefore we just have to show that
$$\frac{t}{(1-t)(1-t\cL)}\sum_{n\ge1}\wtl H_n(t)\TT^n-\frac{\Om'_{>\mu}}{\cL-1}$$
is in $\lV[s^{\pm1}]\pser \TT$. It is enough to show this just for $\mu=0$ because
the difference between $\Om'_{>\mu}$ and $\Om'_{>0}$ is obviously in $\lV[s^{\pm1}]\pser \TT$. 
We have seen in Lemma \ref{lmm:Hprb>0 is palindromic} that
$$\Om'_{>0}=\sum_{n\ge1}\Om_nT^n\frac{t(\cL-1)}{(1-t)(1-t\cL)}.$$
Therefore we have to show that
$$\frac{\wtl H_n(t)-\Om_n}{(1-t)(1-t\cL)}$$
is a polynomial in $s^{\pm1}$ (or equivalently in $t^{\pm1}$) for every $n\ge1$.
By our assumptions $\wtl H_n(t)$ is a polynomial in $t^{\pm1}$ and $\Om_n=\wtl H_n(1)$. Therefore
$$\wtl H_n(t)-\Om_n=\wtl H_n(t)-\wtl H_n(1)$$
is divisible by $1-t$. It follows from the invariance of $\wtl H_n(t)$ under the change of variables $t\mto1/t\cL$ that the above expression is also divisible by $1-t\cL$.
\end{proof}

\section{Examples}
Conjecture \ref{E twisted} describes the motive of the moduli space $\lM(L,n,d)$ of twisted Higgs bundles, where $\deg L=2g-2+p$, $p\ge0$ and $n,d$ are coprime. In particular it says that the $E$-polynomial of $\lM(L,n,d)$ should be equal to $$E_{g,n}^{(p)}(u,v):=(uv)^{\dim\lM(L,n,d)/2}H_n^{(p)}(1,u,v),$$
where $\dim\lM(L,n,d)=2((g-1)n^2+p\binom n2+1)$.

Most of the existing literature deals with Hodge (or Poincar\'e) polynomials of $\lM(L,n,d)$. The reformulation of our conjecture for Hodge polynomials is contained in Remark \ref{Hodge twisted}. There we have defined certain functions $H_n'^{(p)}(t,u,v)$ and claimed that the Hodge polynomial of $\lM(L,n,d)$ should be equal to $H_n'^{(p)}(1,-u,-v)$. For simplicity we will work only with Poincar\'e polynomials. By our conjecture the Poincar\'e polynomial (for ordinary cohomologies) of $\lM(L,n,d)$ should be equal to $P_{g,n}^{(p)}(y):=H_n'^{(p)}(1,-y,-y)$.
By the Poincar\'e duality we should have
$$E_{g,n}^{(p)}(-y,-y)=y^{2\dim\lM(L,n,d)}P_{g,n}^{(p)}(y\inv).$$

\subsection{\texorpdfstring{$g=0$}{g=0}}
Our tests show that $E_{g,n}^{(p)}=0$ and $P_{g,n}^{(p)}=0$ for $n\ge2$, $p\le2$. For $p=3$ we have
\begin{align*}
E_{0,2}^{(3)}(u,v)&=1\\
E_{0,3}^{(3)}(u,v)&=uv(uv+1)\\
E_{0,4}^{(3)}(u,v)&=(uv)^3(u^3v^3+u^2v^2+3uv+2)\\
E_{0,5}^{(3)}(u,v)&=(uv)^6(u^6v^6+u^5v^5+3u^4v^4+5u^3v^3+7u^2v^2+9uv+5)
\end{align*}
\begin{align*}
P_{0,2}^{(3)}(y)&=y^{2}+1	\\
P_{0,3}^{(3)}(y)&=3y^{8}+4y^{6}+3y^{4}+y^{2}+1\\
P_{0,4}^{(3)}(y)&=
10y^{18}+20y^{16}+22y^{14}+18y^{12}+13y^{10}+9y^{8}+5y^{6}+3y^{4}+y^{2}+1	\\
P_{0,5}^{(3)}(y)&=
40y^{32}+103y^{30}+154y^{28}+165y^{26}+156y^{24}+131y^{22}+105y^{20}\\
&+77y^{18} +56y^{16}+38y^{14}+26y^{12}+15y^{10}+10y^{8}+5y^{6}+3y^{4}+y^{2}+1	
\end{align*}

For $p=4$ we have
\begin{align*}
E_{0,2}^{(4)}(u,v)&=(uv)(uv+1)\\
E_{0,3}^{(4)}(u,v)&=(uv)^4(u^4v^4+u^3v^3+3u^2v^2+4uv+3)\\
E_{0,4}^{(4)}(u,v)&=(uv)^9(u^9v^9+u^8v^8+3u^7v^7 +5u^6v^6+9u^5v^5+13u^4v^4+18u^3v^3\\ &+22u^2v^2+20uv+10)\\
E_{0,5}^{(4)}(u,v)&=(uv)^{16}(u^{16}v^{16}+u^{15}v^{15}+3u^{14}v^{14}+5u^{13}v^{13}+10u^{12}v^{12}+15u^{11}v^{11}\\ &+26u^{10}v^{10}+38u^9v^9+56u^8v^8+77u^7v^7+105u^6v^6+131u^5v^5\\ &+156u^4v^4+165u^3v^3+154u^2v^2+103uv+40)
\end{align*}
\begin{align*}
P_{0,2}^{(4)}(y)&=y^{2}+1	\\
P_{0,3}^{(4)}(y)&=3y^{8}+4y^{6}+3y^{4}+y^{2}+1\\
P_{0,4}^{(4)}(y)&=
10y^{18}+20y^{16}+22y^{14}+18y^{12}+13y^{10}+9y^{8}+5y^{6}+3y^{4}+y^{2}+1	\\
P_{0,5}^{(4)}(y)&=
40y^{32}+103y^{30}+154y^{28}+165y^{26}+156y^{24}+131y^{22}+105y^{20}\\
&+77y^{18} +56y^{16}+38y^{14}+26y^{12}+15y^{10}+10y^{8}+5y^{6}+3y^{4}+y^{2}+1	
\end{align*}

\begin{rmr}
The Poincar\'e polynomials for $g=0$, $p=4$, and $r\le5$ were computed by Steven Rayan \cite{rayan_geometry}. The corresponding twisted Higgs bundles are the so-called co-Higgs bundles studied in \cite{rayan_co-higgs}. It is checked in \cite{rayan_geometry} that the Poincar\'e polynomials coincide with the above expressions.
\end{rmr}

\subsection{\texorpdfstring{$g=1$}{g=1}}
We have $P_{1,n}^{(0)}(y)=(1+y)^2$ for $n\ge1$.
For $p=1$ we have
\begin{align*}
E_{1,2}^{(1)}(u,v)&=(uv)^{2}(1-u)(1-v)(uv+1)\\
E_{1,3}^{(1)}(u,v)&=(uv)^{4}(1-u)(1-v)(u^{3}v^{3}+u^{2}v^{2}-uv^{2}-u^{2}v+2uv-v-u+1)\\
E_{1,4}^{(1)}(u,v)&=(uv)^{7}(1-u)(1-v)(u^{6}v^{6}+u^{5}v^{5}-u^{4}v^{5}-2u^{3}v^{4}+3u^{4}v^{4}-u^{5}v^{4}\\&-2u^{4}v^{3}-4u^{2}v^{3}+uv^{3}
+5u^{3}v^{3}+v^{2}-5uv^{2}-4u^{3}v^{2}+8u^{2}v^{2}-5u^{2}v\\&+u^{3}v+7uv-3v+2-3u+u^{2})\\
P_{1,2}^{(1)}(y)&=(1+y)^2(y^{2}+1)\\
P_{1,3}^{(1)}(y)&=(1+y)^2(y^{6}+2y^{5}+2y^{4}+2y^{3}+y^{2}+1)\\
P_{1,4}^{(1)}(y)&=(1+y)^2(2y^{12}+6y^{11}+9y^{10}+10y^{9}+10y^{8}+8y^{7}+5y^{6}+4y^{5}+3y^{4}\\&+2y^{3}+y^{2}+1)
\end{align*}

For $p=2$ we have
\begin{align*}
P_{1,2}^{(2)}(y)&=(1+y)^2(y^{4}+2y^{3}+y^{2}+1)\\
P_{1,3}^{(2)}(y)&=(1+y)^2(3y^{12}+8y^{11}+10y^{10}+10y^{9}+10y^{8}+8y^{7}+5y^{6}+4y^{5}+3y^{4}\\
&+2y^{3}+y^{2}+1)\\
P_{1,4}^{(2)}(y)&=(1+y)^2(10y^{24}+40y^{23}+78y^{22}+108y^{21}+131y^{20}+144y^{19}+137y^{18}\\
&+120y^{17}+108y^{16}+94y^{15}+73y^{14}+56y^{13}+46y^{12}+36y^{11}+25y^{10}\\ &+18y^{9}+14y^{8}+10y^{7}+6y^{6}+4y^{5}+3y^{4}+2y^{3}+y^{2}+1)
\end{align*}

\subsection{\texorpdfstring{$g=2$}{g=2}}
For $p=0$ we have
\begin{align*}
P_{2,2}^{(0)}(y)&=(1+y)^4(2y^{6}+4y^{5}+2y^{4}+4y^{3}+y^{2}+1)\\
P_{2,3}^{(0)}(y)&=(1+y)^4(6y^{16}+24y^{15}+48y^{14}+68y^{13}+67y^{12}+64y^{11}+48y^{10}+32y^{9}\\&+29y^{8}+16y^{7}+10y^{6}+8y^{5}+3y^{4}+4y^{3}+y^{2}+1)\\
P_{2,4}^{(0)}(y)&=(1+y)^4(22y^{30}+144y^{29}+456y^{28}+976y^{27}+1554y^{26}+1984y^{25}\\ &+2184y^{24}+2180y^{23}+1991y^{22}+1696y^{21}+1421y^{20}+1140y^{19}+883y^{18}\\ &+672y^{17}+501y^{16}+384y^{15}+269y^{14}+192y^{13}+139y^{12}+96y^{11}+69y^{10}\\ &+40y^{9}+31y^{8}+20y^{7}+11y^{6}+8y^{5}+3y^{4}+4y^{3}+y^{2}+1)
\end{align*}

\begin{rmr}
The Poincar\'e polynomials $P_{g,n}^{(p)}(y)$ (as well as Hodge polynomials) for $p=0$ and $g\ge1$ were conjectured by Hausel and Rodriguez-Villegas, see Section \ref{sec:hrv}. These formulas were checked for $n=2$ using the results of Hitchin \cite{hitchin_self-duality} and for $n=3$ and small $g$ using the results of Gothen \cite{gothen_betti}. Recently this conjecture was also checked for $n=4$ and small $g$ in \cite{garcia-prada_motive}.
\end{rmr}

For $p=1$ we have 
\begin{align*}
P_{2,2}^{(1)}(y)&=(1+y)^4(2y^{8}+4y^{7}+8y^{6}+4y^{5}+2y^{4}+4y^{3}+y^{2}+1)\\
P_{2,3}^{(1)}(y)&=(1+y)^4(6y^{22}+36y^{21}+96y^{20}+168y^{19}+207y^{18}+216y^{17}+210y^{16}\\&+184y^{15}+149y^{14}+120y^{13}+92y^{12}+72y^{11}+49y^{10}+32y^{9}+29y^{8}\\&+16y^{7}+10y^{6}+8y^{5}+3y^{4}+4y^{3}+y^{2}+1)
\end{align*}

For $p=2$ we have
\begin{align*}
P_{2,2}^{(2)}(y)&=(1+y)^4(2y^{10}+8y^{9}+8y^{8}+8y^{7}+8y^{6}+4y^{5}+2y^{4}+4y^{3}+y^{2}+1)\\
P_{2,3}^{(2)}(y)&=(1+y)^4(10y^{28}+64y^{27}+184y^{26}+344y^{25}+477y^{24}+560y^{23}+583y^{22}\\&+560y^{21}+522y^{20}+464y^{19}+386y^{18}+320y^{17}+267y^{16}+208y^{15}\\&+158y^{14}+124y^{13}+93y^{12}+72y^{11}+49y^{10}+32y^{9}+29y^{8}+16y^{7}\\&+10y^{6}+8y^{5}+3y^{4}+4y^{3}+y^{2}+1)
\end{align*}

\begin{rmr}
The above formulas coincide (up to some multipliers and change of signs) with the formulas in \cite[Section 4]{chuang_motivic} obtained by solving recursively the ADHM recursion formula.
\end{rmr}

\bibliography{../tex/papers}
\bibliographystyle{../tex/hamsplain}

\end{document}